\newif\ifspringer
\newif\ifelsevier
\tikzstyle{every picture}+=[remember picture]
\DeclareRobustCommand\onedot{\futurelet\@let@token\@onedot}
\newcommand{\@onedot}{\ifx\@let@token.\else.\null\fi\xspace}
\newcommand{\ie}{{i.e}\onedot}
\newtheorem{prop}{Proposition}
\newtheorem{rem}{Remark}
\newtheorem{lem}{Lemma}
\theoremstyle{definition}
\newtheorem{defn}{Definition}
\newtheorem{exmp}{Example}
\newcommand{\RR}{\mathbb{R}}
\newcommand{\vectbi}[1]{\boldsymbol{#1}} 
\newcommand{\matri}[1]{#1}               
\newcommand{\vect}[1]{\vectbi{#1}}
\newcommand{\matr}[1]{\matri{#1}}
\DeclareMathOperator{\Span}{span}
\renewcommand{\leq}{\leqslant}
\renewcommand{\geq}{\geqslant}
\DeclareMathAlphabet{\mathcalb}{OMS}{cmsy}{b}{n} 
\DeclareMathAlphabet{\mathcal}{OMS}{cmsy}{m}{n} 
\newcommand{\bUU}{\mathcalb{U}}
\newcommand{\bUU}{\vect{\mathcal{U}}}
\newcommand{\UU}{\mathcal{U}}
\newcommand{\bM}{\vect{M}}
\newcommand{\bDelta}{\vect{\Delta}}
\newcommand{\bR}{\vect{R}}
\newcommand\restr[2]{\ensuremath{\left.#1\right|_{#2}}}
\newlength{\casesvsep}
\newcommand{\figpath}{figure}
\newcommand*{\shifttext}[2]{%
\settowidth{\@tempdima}{#2}%
\makebox[\@tempdima]{\hspace*{#1}#2}%
}
\newcommand{\ann}[1]{{\color{orange}#1}}
\newcommand\footnoteref[1]{\protected@xdef\@thefnmark{\ref{#1}}\@footnotemark}
\journalname{\dots}
\journal{Applied Mathematics and Computation}
\providecommand{\doi}[1]{%
  \begingroup
    \let\bibinfo\@secondoftwo
    \urlstyle{rm}%
    \href{http://dx.doi.org/#1}{%
      doi:\discretionary{}{}{}%
      \nolinkurl{#1}%
    }%
  \endgroup
}
\def\ps@pprintTitle{%
  \let\@oddhead\@empty
  \let\@evenhead\@empty
  \def\@oddfoot{\reset@font\hfil\thepage\hfil}
  \let\@evenfoot\@oddfoot
}
\begin{document}



\newcommand{\titletext}{A practical criterion for the existence of optimal piecewise Chebyshevian spline bases}

\newcommand{\titlerunningtext}{\ann{\titletext}}

\newcommand{\abstracttext}{
A piecewise Chebyshevian spline space is a space of spline functions having pieces in different Extended Chebyshev spaces and where the continuity conditions between adjacent spline segments are expressed by means of connection matrices.
Any such space is suitable for design purposes when it possesses an optimal basis (\ie a totally positive basis of minimally supported splines) and when this feature is preserved under knot insertion.
Therefore, when any piecewise Chebyshevian spline space where all knots have zero multiplicity enjoys the aforementioned properties, then so does any spline space with knots of arbitrary multiplicity obtained from it.

In this paper, we provide a practical criterion and an effective numerical procedure to determine whether or not a given piecewise Chebyshevian spline space with knots of zero multiplicity is suitable for design. Moreover, whenever it exists, we also show how to construct the optimal basis of the space.
}

\ifspringer
\newcommand{\separ}{\and}
\fi
\ifelsevier
\newcommand{\separ}{\sep}
\fi

\newcommand{\keytext}{
Generalized spline spaces \separ Extended Chebyshev (piecewise) spaces \separ Optimal normalized totally positive basis \separ Transition functions \separ Weight functions \separ Geometric design 
}

\newcommand{\MSCtext}{
65D07 \separ 65D17 \separ 41A15 \separ 68W25
}

\ifspringer
\title{\titletext
}
\titlerunning{\titlerunningtext} 

\author{
Michele Antonelli         \and
Carolina Vittoria Beccari \and
Giulio Casciola           \and
Lucia Romani
}
\authorrunning{M.~Antonelli, C.V.~Beccari, G.~Casciola, L.~Romani} 

\institute{
M.~Antonelli \at
 Department of Mathematics, University of Padova,\\
 Via Trieste 63, 35121 Padova, Italy\\
 \email{antonelm@math.unipd.it} 
\and
C.V.~Beccari (\Letter) \at
 Department of Mathematics, University of Bologna,\\
 Piazza di Porta San Donato 5, 40126 Bologna, Italy\\
 \email{carolina.beccari2@unibo.it} 
\and
G.~Casciola \at
 Department of Mathematics, University of Bologna,\\
 Piazza di Porta San Donato 5, 40126 Bologna, Italy\\
 \email{giulio.casciola@unibo.it} 
\and
L.~Romani \at
 Department of Mathematics and Applications, University of Milano-Bicocca,\\
 Via R.~Cozzi 55, 20125 Milano, Italy\\
 \email{lucia.romani@unimib.it} 
}

\date{Received: date / Accepted: date}

\maketitle

\begin{abstract}
\abstracttext
\keywords{\keytext}
\subclass{\MSCtext}
\end{abstract}
\fi

\ifelsevier
\begin{frontmatter}

\title{\titletext}

\author[label1]{Carolina Vittoria Beccari}
\ead{carolina.beccari2@unibo.it}
\author[label1]{Giulio Casciola}
\ead{giulio.casciola@unibo.it}


\address[label1]{Department of Mathematics, University of Bologna,
Piazza di Porta San Donato 5, 40126 Bologna, Italy}

\begin{abstract}
\abstracttext
\end{abstract}

\begin{keyword}
\keytext
\MSC[2010]\MSCtext
\end{keyword}

\end{frontmatter}
\fi

\section{Introduction}
\label{sec:intro}
Extended Chebyshev spaces (EC-spaces) represent a natural generalization of polynomial spaces. They contain transcendental functions and provide additional degrees of freedom, that can be exploited to control the behavior of parametric curves and to accomplish shape-preserving approximations.
While Chebyshevian splines are piecewise functions whose pieces belong to the same EC-space \cite{Schu2007}, piecewise functions having sections in different EC-spaces are called piecewise Chebyshevian splines. The latter are of great interest in Geometric Design and Approximation for their capacity to combine the local nature of splines with the diversity of shape effects provided by the wide range of known EC-spaces.

Unfortunately, in general, there is no guarantee that a piecewise Chebyshevian spline space can have a real interest for applications.
In particular, the existence of a normalized, totally positive B-spline basis is essential in order to have computational stability and good approximation properties. Such a basis is the Optimal Normalized Totally Positive basis (ONTP basis for short), in the sense of the B-basis \cite{CP96}.
Moreover, not only the ONTP basis shall exist in the spline space itself, but also in all other spaces derived from it by insertion of knots.
Besides being crucial for the development of most geometric modeling algorithms, the latter feature allows for the existence of a multiresolution analysis and permits local refinement for solving PDEs.
Therefore, a spline space is suitable for design when it possesses the ONTP basis and when this property is preserved under knot insertion.

Piecewise Chebyshevian spline spaces were introduced in the seminal paper by Barry
\cite{Barry1996}. In that framework, the continuity conditions between adjacent spline pieces are expressed in terms of connection matrices linking the appropriate number of left and right generalized derivatives. By exploiting different theoretical machineries, first Barry and later Mühlbach and coauthors \cite{BMuhl2003,Muhl06,MT2006} proved that, if all the connection matrices are totally positive, then the corresponding spline space has a B-spline basis.
Later, Mazure showed that total positivity of the connection matrices is a far too restrictive assumption \cite{Maz2001}.
Moreover she demonstrated the equivalence between the existence of blossoms \cite{Maz2004b,Maz2009} and the existence of a B-spline basis both in the spline space itself and in all other spaces derived from it by knot insertion.
When blossoms exist, Mazure calls a space ``good for design", the terminology being also motivated by the fact that blossoms enable to easily develop all classical geometric design algorithms such as evaluation, knot-insertion and subdivision, and guarantee the existence of the ONTP basis.

From a practical point of view, however, it is not easy to check whether or not blossoms do exist.
To overcome this difficulty, Mazure introduced the notion of Extended Chebyshev Piecewise spaces (ECP-spaces) and generalized the classical theory of EC-spaces to the piecewise setting \cite{Maz2011b}.

Given a closed and bounded real interval $I$, a (single) $m$-dimensional EC-space $\UU_m \subset C^{m-1}(I)$ containing constants is good for design if and only if $D\UU_m$ is an EC-space \cite{CMP2003}.
For null spaces of linear differential operators with constant coefficients the latter property is verified whenever the length of the interval $I$ does not exceed a maximal length, which is known or can be computed for a wide class of EC-spaces of interest for applications \cite{CMP2003,MP2010,BM2012}.

In the piecewise setting, we shall take an increasing partition $a=x_0<x_1<\dots<x_k<x_{k+1}=b$ of an interval $[a,b]$, and assume that, for $i=0,\dots,k$, $\UU_{i,m}$ is an $m$-dimensional EC-space on $[x_i,x_{i+1}]$ containing constants.
Let us consider the $m$-dimensional space of all piecewise $C^{m-1}$ functions $F$ defined separately on each interval $[x_i,x_{i+1}]$ such that the restriction of $F$ to $[x_i,x_{i+1}]$, denoted by $F^{[i]}$, belongs to $\UU_{i,m}$ for all $i=0,\dots,k$ and such that, for $i=1,\dots,k$, the left and right derivatives of $F$ are connected by means of the relation
\begin{equation}\label{eq:PCn}
\left(D^0 F^{[i]}(x_i),D^1 F^{[i]}(x_i),\dots,D^{(m-1)} F^{[i]}(x_i)\right)^T=\\
R_i \left(D^0 F^{[i-1]}(x_i),D^1 F^{[i-1]}(x_i),\dots,D^{(m-1)} F^{[i-1]}(x_i)\right)^T,
\end{equation}
where each $R_i$, $i=1,\dots,k$, is a proper \emph{connection matrix} of order $m\times m$.
We call any such space a spline space with knots of zero multiplicity.
Mazure \cite{Maz2011b} proved that, in analogy with the non-piecewise case, if the piecewise space $DS$ is an ECP-space, then the spline space $S$ with knots of zero multiplicity is good for design and so is any space obtained from it by insertion of knots.
In the same paper, she also showed that systems of piecewise weight functions generate ECP-spaces just like systems of weight functions produce EC-spaces. More precisely, she demonstrated that a spline space with knots of zero multiplicity is an ECP-space if and only if it can be generated by a system of piecewise weight functions and
she showed how to build all the infinitely many possible such systems.


Given an arbitrary partition of an interval $[a,b]$, a sequence of EC-spaces and a sequence of connection matrices, in this paper we provide a practical method and an effective numerical procedure to determine whether or not the corresponding spline space with knots of zero multiplicity is suitable for design.
To this aim, after introducing the fundamental notions and results (Section \ref{sec:context}), in Section \ref{sec:suff_cond} we illustrate how to compute a set of functions, called \emph{transition functions}, that, when suitably combined, give rise to the ONTP basis of the space (provided it exists).
Exploiting the transition functions we construct a particular sequence of piecewise functions and show that, if the space in question is suitable for design, then they are an associated system of piecewise weight functions.
In Section \ref{sec:test} we develop a computationally efficient algorithm during which the sequence of candidate weight functions is recursively computed and, at each step, a proper test on the transition functions is performed to decide whether to continue or not. In this way, if the procedure reaches the final step, then the given space is suitable for design.

Because when a spline space with knots of zero multiplicity is suitable for design then so is any space obtained from it by knot insertion, the proposed procedure also yields sufficient conditions for the existence of ONTP bases in piecewise Chebyshevian spline spaces with knots of arbitrary multiplicity.
Moreover, it provides a tool to exploit spline spaces with knots of zero multiplicity themselves for design purposes.
Indeed, it has already been demonstrated that the additional degrees of freedom provided
by the connection matrices can be exploited as shape parameters \cite{Maz-Laurent98}.
The effective numerical procedure proposed in the present paper can be used to interactively tune these parameters so as to control the shape of parametric curves, while staying in the class of suitable spaces.

Finally, we would like to mention the recent application of spline spaces with knots of zero multiplicity to the construction of locally supported spline interpolants \cite{BCR2013a,ABC2013a}.


\section{Basic notions and notation}
\label{sec:context}
We start by introducing Extended Chebshev spaces (EC-spaces), that represent the building blocks of piecewise Chebyshevian splines.

\begin{defn}[Extended Chebyshev space]\label{def:ECspace}
Let $I\subset\RR$ be a closed bounded interval. An $m$-dimensional space $\UU_m\subset C^{m-1}(I)$, $m\geq1$, is an \emph{Extended Chebyshev space} (EC-space, for short) on $I$ if any nonzero element of $\UU_m$ vanishes at most $m-1$ times in $I$, counting multiplicities as far as possible for $C^{m-1}$ functions (that is, up to $m$), or, equivalently, if any Hermite interpolation problem in $m$ data in $I$ has a unique solution in $\UU_m$.
\end{defn}

Throughout the paper we adopt the following notation. Let $I=[a,b]\subset \RR$ be a closed and bounded interval and $\bDelta\coloneqq\left\{x_i, i=1,\dots,k\right\}$ a sequence of points, such that $a \equiv x_{0} < x_{1} < \ldots <x_{k} < x_{k+1} \equiv b$. In particular $\bDelta$ determines a sequence of subintervals of the form $I_i\coloneqq\left[x_i,x_{i+1}\right]$, $i=0,\dots,k$.

We shall say that $F$ is a \emph{piecewise function} on $(I,\bDelta)$ if $F$ is defined separately on each interval $I_i$ of $\bDelta$, $i=0,\dots,k$, meaning that $F(x_i^-)$ and $F(x_i^+)$ are defined, but they may be different. In analogy, we shall say that $F$ is a \emph{piecewise $C^n$} function on $(I,\bDelta)$ if $F$ is $C^n$ in each interval $I_i$.

Let us now consider an ordered set of $m$-dimensional spaces $\bUU_m \coloneqq\{\UU_{0,m},\dots,\UU_{k,m}\}$, such that every $\UU_{i,m}$ is an EC-space on the interval $I_i$, for $i=0,\dots,k$.
Moreover, let us associate to the elements of $\bDelta$ a multiplicity vector, namely
a vector of positive integers $\bM\coloneqq\left(m_1,\dots,m_k\right)$, such that $0 \leq m_i \leq m-1$ for all $i=1,\dots,k$, and
a sequence of connection matrices $\bR \coloneqq \{R_i, i=1,\dots,k\}$, where $R_i$ is lower triangular, of order $m-m_i$, has positive diagonal entries and has first row and column equal to $(1,0,\dots,0)$.
Hence the space of piecewise Chebyshevian spline functions \emph{based on} $\bUU_{m}$, which we indicate by $S(\bUU_m,\bM,\bDelta,\bR)$, is defined as follows.

\begin{defn}[Piecewise Chebyshevian splines]\label{def:PCS}
\qquad We define the set of piecewise Chebyshevian splines\\ $S(\bUU_m,\bM,\bDelta,\bR)$ based on $\bUU_m$ with knots $\bDelta \coloneqq \{x_1,\dots,x_k\}$ of multiplicities $\bM \coloneqq (m_1,\dots,m_k)$ and connection matrices $\bR \coloneqq \{R_1,\dots,R_k\}$ as
the set of all piecewise $C^{m-1}$ functions $s$ on $(I,\bDelta)$ such that:
\begin{itemize}
\item[i)] the restriction of $s$ to $I_i$, denoted by $s^{[i]}$, belongs to $\UU_{i,m}$, for $i=0,\dots,k$;
\item[ii)] $\left(D^0 s^{[i]}(x_i),D^1 s^{[i]}(x_i),\dots,D^{(m-m_i-1)} s^{[i]}(x_i)\right)^T =
R_i \left(D^0 s^{[i-1]}(x_i),D^1 s^{[i-1]}(x_i),\dots,D^{(m-m_i-1)} s^{[i-1]}(x_i)\right)^T$,
 $\ i=1,\dots,k.$
\end{itemize}
\end{defn}

The requirement that the first row and column of each matrix $R_i$ be equal to the vector $(1,0,\dots,0)$ guarantees the continuity of the splines defined above.
When all the matrices $R_i$ are the identity matrix, such splines are \emph{parametrically continuous}. Conversely, if the matrices $R_i$ are not the identity matrix, the corresponding spline space is a space of \emph{geometrically continuous} piecewise Chebyshevian splines.

In the particular case where the multiplicity vector has all elements equal to zero, we denote a piecewise Chebyshevian spline space by $S(\bUU_m,\bDelta,\bR)$ and call it a \emph{spline space with knots of zero multiplicity}.
Therefore, a spline space with knots of zero multiplicity is an $m$-dimensional space obtained by joining a number of
different $m$-dimensional EC-spaces with proper connection matrices. We also use the notation $DS(\bUU_m,\bDelta,\bR)\coloneqq \{DF \, | \, F \in S(\bUU_m,\bDelta,\bR)\}$.

The fact that all matrices in Definition \ref{def:PCS} are assumed to be lower triangular and have positive diagonal elements is essential to count zeros as well as to make Rolle's theorem valid in the piecewise context \cite{Maz2005}.
In fact, the regularity and lower triangular structure of the connection matrices entail that, for $i = 1,\dots,k$,
$x_i^+$ is a zero of multiplicity $r\leq m$ of
a given function $F\in S(\bUU_m,\bDelta,\bR)$ if and only if so is $x_i^-$.
Under the stated assumptions on the connection matrices, we can
therefore introduce the number of zeros $Z_{m}(F)$ as the total number of zeros of $F$ in $I$, counting multiplicities up to $m$, as for functions in $C^{m-1}(I)$.
This allows us to generalize the notion of EC-space to the piecewise setting as follows \cite{Maz2005}.

\begin{defn}[ECP-space]\label{def:ECP}
A spline space $S(\bUU_m,\bDelta,\bR)$ is an \emph{Extended Chebyshev Piecewise space} on $(I,\bDelta)$ (ECP-space for short) if any
of the two following properties is satisfied:
\begin{enumerate}
\item any nonzero element $F\in S(\bUU_m,\bDelta,\bR)$ satisfies $Z_{m}(F)\leq m-1$; 
\item any Hermite interpolation problem has a unique solution in $S(\bUU_m,\bDelta,\bR)$ in the sense that, for any positive integers $\mu_1,\dots,\mu_h$, such that $\sum\nolimits_{j=1}\nolimits^h\mu_j = m$, any pairwise distinct $\tau_1,\dots,\tau_h$ $\in I$, any $\epsilon_1,\dots,\epsilon_h \in \{+,-\}$, and any real numbers $\alpha_{j,r}$, $r=0,\dots,\mu_j-1$, $j = 1,\dots,h$, there exists a unique element $F$ of $S(\bUU_m,\bDelta,\bR)$ such that
    \begin{equation}\label{eq:Hermite}
    F^{(r)}\left(\tau_j^{\epsilon_j}\right)=\alpha_{j,r}, \qquad 0 \leq r\leq \mu_j-1, \quad 1\leq j\leq h.
    \end{equation}
\end{enumerate}
\end{defn}

From the above definition, it can be seen that ECP-spaces share with polynomial and Extended Chebyshev spaces the same bound of zeros for their non-zero elements. Moreover, the class of ECP-spaces is closed under integration and multiplication by positive piecewise functions \cite{Maz2011b}.
Exploiting this property, in the same work, it was proved that systems of piecewise weight functions produce ECP-spaces just like
systems of weight functions classically produce EC-spaces.
More precisely, a \emph{system of piecewise weight functions} is a sequence of piecewise functions $\{w_0,\dots,w_{m-1}\}$ on $(I,\bDelta)$ such that, for all $j=0,\dots,m-1$, $w_j$ is positive and piecewise $C^{m-j-1}$.
%
Based on it, we can define a sequence of piecewise linear differential operators, or \emph{generalized derivatives}, as
\begin{equation}\label{eq:der_gen}
L_0 F \coloneqq \frac{F}{w_0}, \qquad
L_j F \coloneqq \frac{1}{w_j}D L_{j-1} F, \quad j=1,\dots,m-1,
\end{equation}
where $D$ denotes ordinary differentiation (meant piecewisely) and $F$ is any piecewise $C^{m-1}$ function on $(I,\bDelta)$. The following result holds \cite{Maz2011b}.

\begin{prop}\label{prop:ECP_def}
Let $\{w_0,\dots,w_{m-1}\}$ be a system of piecewise weight functions associated with piecewise differential operators $L_0,L_1,\dots,L_{m-1}$. Then the set of all piecewise $C^{m-1}$ functions $F$ on $(I,\bDelta)$ such that
\begin{enumerate}
\item[i)] $L_{m-1}(F)$ is constant on $I$;
\item[ii)] $L_j F^{[i]}(x_i)= L_j F^{[i-1]} (x_i)$,\; $i = 1,\dots,k$, \;$j = 0,\dots,m-1$;
\end{enumerate}
is an $m$-dimensional ECP-space on $(I,\bDelta)$.
\end{prop}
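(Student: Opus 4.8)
The plan is to argue by induction on the dimension $m$. For $m=1$ the space of the statement is $\{c\,w_0:c\in\RR\}$, which is one-dimensional and, since $w_0>0$, has no nonzero element vanishing anywhere; hence it is an ECP-space in the sense of Definition~\ref{def:ECP}. For the inductive step I would fix $\{w_0,\dots,w_{m-1}\}$ and pass to the \emph{reduced} system $\{\widehat w_0,\dots,\widehat w_{m-2}\}$ with $\widehat w_0:=1$ and $\widehat w_j:=w_{j+1}$ for $j\ge1$; this is again a system of $m-1$ piecewise weight functions (each $\widehat w_j$ has exactly the smoothness $C^{(m-1)-j-1}$ required of the $j$-th weight of an $(m-1)$-dimensional system), and its generalized derivatives $\widehat L_0,\dots,\widehat L_{m-2}$ defined as in \eqref{eq:der_gen} satisfy $\widehat L_{j-1}(L_1F)=L_jF$ for $j=1,\dots,m-1$, directly from \eqref{eq:der_gen}. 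By the induction hypothesis the space $\widehat S$ obtained from the reduced system by the same construction is an $(m-1)$-dimensional ECP-space. Since for an $m$-dimensional space the two characterizations in Definition~\ref{def:ECP} are equivalent — the Hermite map \eqref{eq:Hermite} is linear between spaces of equal dimension, so ``injective'' $=$ ``bijective'', and injectivity is property~1 read off one-sided zeros, the knot convention recalled before Definition~\ref{def:ECP} making the two readings coincide — it suffices to prove that the space $S$ of the statement is $m$-dimensional and that every nonzero $F\in S$ satisfies $Z_m(F)\le m-1$.

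For the dimension, I would show that the linear map $T\colon S\to\RR\times\widehat S$, $T(F)=(L_0F(a),L_1F)$, is a bijection. It is well defined: condition~ii) makes $L_0F,\dots,L_{m-1}F$ continuous on $I$, and via the identity $\widehat L_{j-1}(L_1F)=L_jF$ this means $L_1F$ meets the continuity and constancy requirements defining $\widehat S$, so $L_1F\in\widehat S$. It is injective: if $L_1F\equiv0$ then $DL_0F\equiv0$ piecewise, so $L_0F$ is piecewise constant and, being continuous, globally equal to $L_0F(a)$; if also $L_0F(a)=0$ then $F=w_0\,L_0F\equiv0$. It is surjective: given $(c,G)$, put $\varphi(x):=c+\int_a^x w_1(t)G(t)\dx t$ and $F:=w_0\,\varphi$; since $w_1G$ is piecewise $C^{m-2}$ and its primitive is globally continuous, $\varphi$ and hence $F$ are piecewise $C^{m-1}$, and one checks $L_0F=\varphi$, $L_1F=G$, $L_jF=\widehat L_{j-1}G$ for $j\ge1$, so all $L_jF$ are continuous across the $x_i$ and $L_{m-1}F=\widehat L_{m-2}G$ is constant, \ie $F\in S$ and $T(F)=(c,G)$. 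Therefore $\dim S=1+\dim\widehat S=m$. (This is just the instance, for our space, of the two ECP-preserving operations used in \cite{Maz2011b}: multiplication by the positive piecewise function $w_1$, then integration from $a$, then multiplication by the positive piecewise function $w_0$.)

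For the zero count, suppose some nonzero $F\in S$ had $Z_m(F)\ge m$. Writing $F=w_0\,L_0F$ with $w_0>0$ piecewise $C^{m-1}$, the zeros of $F$ and of $L_0F$ coincide with the same multiplicities, so $Z_m(L_0F)\ge m$. As $L_0F$ is continuous on $I$ and piecewise $C^{m-1}$, the piecewise Rolle theorem of \cite{Maz2005} gives $Z_{m-1}(L_1F)\ge Z_m(L_0F)-1\ge m-1$ (using once more that $L_1F=\tfrac1{w_1}DL_0F$ is continuous and $w_1>0$ does not affect zero multiplicities). But $L_1F\in\widehat S$, an $(m-1)$-dimensional ECP-space, forces $Z_{m-1}(L_1F)\le m-2$ unless $L_1F\equiv0$; and if $L_1F\equiv0$ then, as in the injectivity step, $F=L_0F(a)\,w_0$ is a nonzero multiple of $w_0$, which has no zeros at all, contradicting $Z_m(F)\ge m\ge1$. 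Hence $Z_m(F)\le m-1$, and together with $\dim S=m$ this proves that $S$ is an $m$-dimensional ECP-space.

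The step I expect to be the real obstacle is the careful application of the piecewise Rolle theorem together with the zero bookkeeping at the knots: one must be sure that a zero of multiplicity $r$ counted from the left of a knot $x_i$ is a zero of the same multiplicity from the right (which is precisely what the lower-triangular, positive-diagonal structure of the underlying connections — equivalently, the continuity of every $L_jF$ forced by condition~ii) — guarantees), and that neither multiplying by $w_1$ or $w_0$ nor integrating silently loses or creates zeros. Everything else — that $\{\widehat w_0,\dots,\widehat w_{m-2}\}$ is a legitimate weight system of the right smoothness and that $T$ is bijective — is routine bookkeeping.
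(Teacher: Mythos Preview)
The paper does not prove this proposition; it is quoted from \cite{Maz2011b}, and the surrounding text only sketches the reason: ``the class of ECP-spaces is closed under integration and multiplication by positive piecewise functions \dots\ Exploiting this property, in the same work, it was proved that systems of piecewise weight functions produce ECP-spaces.'' Your induction implements precisely this sketch: the reduced system $\{\widehat w_0,\dots,\widehat w_{m-2}\}$ gives an $(m-1)$-dimensional ECP-space $\widehat S$ by hypothesis, and $S$ is obtained from $\widehat S$ by multiplying by $w_1$, integrating, and multiplying by $w_0$. The bijection $T$ and the Rolle step are the concrete verification that these operations preserve the dimension count and the zero bound. So your argument is correct and is essentially the approach indicated (but not written out) in the paper.

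One small cosmetic point: your choice $\widehat w_0=1$ is convenient but not the only natural one; taking $\widehat w_0=w_1$ and $\widehat w_j=w_{j+1}$ throughout would make $\widehat L_j=L_{j+1}$ on the nose and spare you the separate ``multiply by $w_1$'' step in the surjectivity construction. Either way the bookkeeping goes through.
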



%


We shall use the above result in the following way. Given a spline space $S(\bUU_m,\bDelta,\bR)$ with knots of zero multiplicity, if there does exist a system of piecewise weight functions $\{w_0,\dots,w_{m-1}\}$ such that any spline in the space satisfies \emph{i)} and \emph{ii)}, then
$S(\bUU_m,\bDelta,\bR)$ is an ECP-space. In this case we say that $S(\bUU_m,\bDelta,\bR)$ is associated with the system of piecewise weight functions and write $S(\bUU_m,\bDelta,\bR)=ECP(w_{0},\dots,w_{m-1})$. 

The property of being an ECP-space is closely related to the concept of Bernstein basis.


\begin{defn}[Bernstein-like and Bernstein basis]
\label{def:Bernst_basis}
A \emph{Bernstein-like basis on $I=[a,b]$} is a sequence of functions $\{B_{\ell,m}, \ \ell=1,\dots,m\}$
in $S(\bUU_m,\bDelta,\bR)$ such that $B_{\ell,m}$ vanishes exactly $\ell-1$ times at $a$ and exactly $m-\ell$ times at $b$ and is positive on $(a,b)$. A Bernstein-like basis is said to be a \emph{Bernstein basis on $[a,b]$} if it is normalized, meaning that $\sum^{m}_{\ell=1} B_{\ell,m}(x) = 1$, $\forall x\in[a,b]$.
\end{defn}

On any interval $[c,d]\subset[a,b]$ a Bernstein or Bernstein-like basis is a basis in the restriction of  $S(\bUU_m,\bDelta,\bR)$ to $[c,d]$ that satisfies the requirements of Definition \ref{def:Bernst_basis} at $c$ and $d$.

The theory of EC- and ECP-spaces and the study of their link with the existence of Bernstein-type bases were developed by Mazure and we refer the reader to \cite{Maz2005,Maz2005b} for a proof of the following results.

\begin{prop}\label{prop:ECP_bernstein-like}
Given a piecewise Chebyshevian spline space $S(\bUU_m,\bDelta,\bR)$, with knots of zero multiplicity, which contains constants, the following properties are equivalent:
\begin{enumerate}
\item[i)] $S(\bUU_m,\bDelta,\bR)$ is an ECP-space on $I$;
\item[ii)] $S(\bUU_m,\bDelta,\bR)$ possesses a Bernstein-like basis on any $[c,d] \subseteq I$.
\end{enumerate}
\end{prop}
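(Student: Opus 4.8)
The plan is to prove the two implications separately; the second one carries the real content.

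\textbf{(i) $\Rightarrow$ (ii).} This is essentially a direct application of the Hermite interpolation property. Fix $[c,d]\subseteq I$ and, for each $\ell\in\{1,\dots,m\}$, invoke part~2 of Definition~\ref{def:ECP} (applied to $S(\bUU_m,\bDelta,\bR)$ restricted to $[c,d]$) with the $m$ admissible conditions $D^rF(c^+)=0$ for $0\le r\le \ell-2$, $D^rF(d^-)=0$ for $0\le r\le m-\ell-1$, and the single normalization $D^{\ell-1}F(c^+)=1$, all at the two distinct points $c,d$; this produces a unique $B_{\ell,m}\in S(\bUU_m,\bDelta,\bR)$. By construction $B_{\ell,m}$ vanishes at least $\ell-1$ times at $c$ and at least $m-\ell$ times at $d$; since the space is an ECP-space, $Z_m(B_{\ell,m})\le m-1$, so these are all of its zeros, it vanishes \emph{exactly} $\ell-1$ times at $c$ and $m-\ell$ times at $d$, and none in $(c,d)$. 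The normalization together with the Taylor behaviour $B_{\ell,m}(x)\sim\tfrac{1}{(\ell-1)!}(x-c)^{\ell-1}$ near $c$ pins its sign to $+$ just right of $c$, hence on all of $(c,d)$. A triangular cascade of derivative evaluations at $c$ gives linear independence, so, the space being $m$-dimensional, $\{B_{\ell,m}\}_{\ell=1}^m$ is the desired Bernstein-like basis.

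\textbf{(ii) $\Rightarrow$ (i).} Here I would establish property~1 of Definition~\ref{def:ECP} by strong induction on $m$, phrasing the inductive statement for the slightly larger class in which the connection matrices are only required to be lower triangular with positive diagonal entries --- this is all that zero-counting and Rolle's theorem need, and it is the class that the reduction below produces. The base case $m=1$ is immediate: a Bernstein-like basis is then a single strictly positive function, the space is its $\RR$-span, and no nonzero element vanishes. For the step, suppose $F\neq 0$ with $Z_m(F)\ge m$; retaining exactly $m$ zeros, place them at $y_1<\dots<y_q$ with multiplicities $n_1,\dots,n_q$, $\sum_j n_j=m$. If $F$ vanished identically on some subinterval it would vanish on the whole EC-piece meeting it, and then, the connection matrices being invertible, on all of $I$ --- contradiction; so $F$ vanishes on no interval. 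If $q=1$ the single zero sits either inside some $\UU_{i,m}$, contradicting that piece's EC-property, or at a knot/endpoint, where transporting the zero across the (regular, lower triangular) connection matrix onto an adjacent piece yields the same contradiction. If $q\ge 2$, set $[c,d]:=[y_1,y_q]$, expand $F|_{[c,d]}$ in the Bernstein-like basis of the restricted space (which spans that $m$-dimensional space), and note that the $n_1$-fold zero at $c$ and the $n_q$-fold zero at $d$ force, via triangular cascades, the first $n_1$ and the last $n_q$ coefficients to vanish; when $q=2$ this exhausts all coefficients, so $F\equiv 0$ on $[c,d]$ --- impossible.

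The case $q\ge 3$ is the heart of the matter, and I expect the one genuine obstacle to lie here. The clean route is to apply Rolle's theorem --- legitimate in the piecewise setting precisely because the connection matrices are regular and lower triangular (the feature recalled in Section~\ref{sec:context}) --- to deduce $Z_{m-1}(DF)\ge m-1$, and then to invoke the inductive hypothesis on $DS(\bUU_m,\bDelta,\bR)$, a space of dimension $m-1$ whose connection matrices are again lower triangular with positive diagonal. This works \emph{provided} $DS(\bUU_m,\bDelta,\bR)$ still possesses a Bernstein-like basis on every $[c,d]\subseteq I$, and proving that is the crux: the naive candidates $DB_{\ell,m}$ need not remain positive, so instead one fixes a strictly positive weight on $[c,d]$ --- for instance $w:=\sum_{\ell=1}^m B_{\ell,m}$, positive everywhere since $B_{1,m}(c)>0$ and $B_{m,m}(d)>0$ (and here the assumption that the space contains constants is convenient, $1$ being a positive combination of the $B_{\ell,m}$) --- and verifies that the reduced system $\bigl\{D(B_{\ell,m}/w)\bigr\}$, with its one identically-zero member discarded, exhibits exactly the zero-and-positivity pattern of a Bernstein-like basis of $DS(\bUU_m,\bDelta,\bR)$ on $[c,d]$. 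This reduction lemma is the only non-formal ingredient; once it is in place the induction closes and the bound $Z_m(F)\le m-1$ follows, which --- being equivalent, under our standing hypotheses on the connection matrices, to the Hermite property --- is exactly statement (i).
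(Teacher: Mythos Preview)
First, a framing remark: the paper does not supply its own proof of this proposition --- it is quoted as a known result from Mazure's work --- so there is no in-paper argument to compare against. Your direction (i)$\Rightarrow$(ii) is standard and correct.

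For (ii)$\Rightarrow$(i), however, the ``reduction lemma'' you single out as the crux does not work as written. With $w\coloneqq\sum_{\ell} B_{\ell,m}$, the functions $D(B_{\ell,m}/w)$ lie in $D\bigl(\{F/w:F\in S(\bUU_m,\bDelta,\bR)\}\bigr)$, not in $DS(\bUU_m,\bDelta,\bR)$; the two coincide only when $w$ is constant. Even after redirecting the induction to this quotient space (which is legitimate, since $F/w$ inherits the zeros of $F$), the family $\{D(B_{\ell,m}/w)\}_{\ell=1}^{m}$ has no ``identically-zero member'' to discard: the $m$ functions sum to zero because $\sum_\ell B_{\ell,m}/w\equiv 1$, but none of them vanishes individually. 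Worse, the endpoint multiplicities are off: $B_{\ell,m}/w$ vanishes exactly $m-\ell$ times at $d$, so its derivative vanishes only $m-\ell-1$ times there --- one short of the $(m-1)-(\ell-1)=m-\ell$ required of the $(\ell-1)$th member of a Bernstein-like basis in dimension $m-1$. The natural repair is to pass to the cumulative sums $g_\ell\coloneqq (B_{\ell,m}+\dots+B_{m,m})/w$, whose derivatives do have the correct zero orders at both endpoints; but then the positivity requirement becomes ``each $g_\ell$ is strictly increasing on $[c,d]$'', and this does \emph{not} follow from the existence of a single Bernstein-like basis on $[c,d]$ --- it is essentially equivalent to the implication you are trying to prove (compare Proposition~\ref{prop:fmc}, which needs the stronger hypothesis that $DS$ is already an ECP-space). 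The arguments in the cited references close this loop by a more delicate use of the Bernstein-like bases on \emph{all} subintervals simultaneously; your sketch does not yet capture that mechanism.
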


\begin{prop}\label{prop:ECP_Bernstein}
Given a piecewise Chebyshevian spline space $S(\bUU_m,\bDelta,\bR)$, with knots of zero multiplicity, which contains constants, the following properties are equivalent:
\begin{enumerate}
\item[i)] $DS(\bUU_m,\bDelta,\bR)$ is an $(m-1)$-dimensional ECP-space on $I$;
\item[ii)] $S(\bUU_m,\bDelta,\bR)$ possesses the Bernstein basis on any $[c,d] \subseteq I$.
\end{enumerate}
\end{prop}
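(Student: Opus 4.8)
The plan is to prove the two implications separately, using repeatedly that, since $S(\bUU_m,\bDelta,\bR)$ contains constants, a \emph{Bernstein basis} in the sense of Definition \ref{def:Bernst_basis} is nothing but a Bernstein-like basis whose elements can be rescaled, one by one, so as to sum to the constant $1$.

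For the implication (i)$\Rightarrow$(ii) I would argue by integration. Assume $DS(\bUU_m,\bDelta,\bR)$ is an $(m-1)$-dimensional ECP-space on $I$. Then, as for any ECP-space, $DS(\bUU_m,\bDelta,\bR)$ possesses on every $[c,d]\subseteq I$ a Bernstein-like basis $\{N_{j,m-1}\}_{j=1}^{m-1}$ (the essentially unique element of the space vanishing $j-1$ times at $c$ and $m-1-j$ times at $d$, normalized positive on $(c,d)$; existence and uniqueness follow from the Hermite interpolation property in Definition \ref{def:ECP}). Since $S(\bUU_m,\bDelta,\bR)$ consists precisely of the primitives of the elements of $DS(\bUU_m,\bDelta,\bR)$ (the constants being the primitives of $0$), one can lift $\{N_{j,m-1}\}$ to a family $\{B_{\ell,m}\}_{\ell=1}^{m}\subset S(\bUU_m,\bDelta,\bR)$ by suitable integration: take $B_{1,m}(x)\coloneqq\int_x^d N_{1,m-1}(t)\dx{t}$ and $B_{m,m}(x)\coloneqq\int_c^x N_{m-1,m-1}(t)\dx{t}$, and the intermediate $B_{\ell,m}$ as the primitives, vanishing at $c$, of appropriate positive combinations $N_{\ell-1,m-1}-\alpha_\ell N_{\ell,m-1}$. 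Using that $DS(\bUU_m,\bDelta,\bR)$ is an ECP-space to count the zeros of these derivatives, one checks that each $B_{\ell,m}$ vanishes exactly $\ell-1$ times at $c$ and exactly $m-\ell$ times at $d$, is positive on $(c,d)$, and that the family telescopes to $\sum_{\ell}B_{\ell,m}\equiv1$; hence it is the Bernstein basis of $S(\bUU_m,\bDelta,\bR)$ on $[c,d]$. This part is essentially bookkeeping.

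The implication (ii)$\Rightarrow$(i) carries the real content. Assume $S(\bUU_m,\bDelta,\bR)$ has the Bernstein basis $\{B_{\ell,m}\}_{\ell=1}^{m}$ on every $[c,d]\subseteq I$; since constants are exactly the kernel of $D$ on the space, $DS(\bUU_m,\bDelta,\bR)$ is $(m-1)$-dimensional. On a fixed $[c,d]$ set, for $j=1,\dots,m-1$,
\[ N_{j,m-1}\coloneqq D\Bigl(\textstyle\sum_{i=j+1}^{m}B_{i,m}\Bigr)=-D\Bigl(\textstyle\sum_{i=1}^{j}B_{i,m}\Bigr), \]
the second identity being forced by $\sum_i B_{i,m}\equiv1$. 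Reading off the first form, $N_{j,m-1}$ vanishes exactly $j-1$ times at $c$; reading off the second, exactly $m-1-j$ times at $d$; and the $N_{j,m-1}$ are linearly independent because the antiderivatives that vanish at $c$ depend triangularly on the $B_{i,m}$. If each $N_{j,m-1}$ turns out to be positive on $(c,d)$ --- equivalently, if the partial sums $\sum_{i>j}B_{i,m}$ of the \emph{normalized} basis are strictly monotone on $(c,d)$ --- then $\{N_{j,m-1}\}_{j=1}^{m-1}$ is a Bernstein-like basis of the restriction of $DS(\bUU_m,\bDelta,\bR)$ to $[c,d]$. Having such a basis on every $[c,d]\subseteq I$ then forces the zero bound of Definition \ref{def:ECP} on $DS(\bUU_m,\bDelta,\bR)$ by the localization argument behind Proposition \ref{prop:ECP_bernstein-like} (a nonzero element with $\geq m-1$ zeros would, after passing to a subinterval delimited by two of them, have vanishing first and last coordinates, be pushed into a smaller space, and ultimately be forced to vanish identically, hence everywhere), so $DS(\bUU_m,\bDelta,\bR)$ is an $(m-1)$-dimensional ECP-space on $I$.

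The step I expect to be the main obstacle is precisely the positivity of the $N_{j,m-1}$ on $(c,d)$ in the direction (ii)$\Rightarrow$(i), i.e.\ the monotonicity of the partial sums of the normalized Bernstein basis. These functions are close cousins of the \emph{transition functions} introduced in the next section; a priori the only forced information is their endpoint zeros, and their sign on the open interval is exactly what separates a space that merely carries Bernstein-like bases from one that is good for design. I would try to establish it either through blossoming in the piecewise setting, or through a continuity/deformation argument on $[c,d]$ combined with the uniqueness of Hermite interpolation in Definition \ref{def:ECP}; this is the technical content the paper defers to \cite{Maz2005,Maz2005b}.
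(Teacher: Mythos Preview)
The paper does not prove this proposition. Immediately before stating it (together with Proposition~\ref{prop:ECP_bernstein-like}), the authors write that ``the theory of EC- and ECP-spaces and the study of their link with the existence of Bernstein-type bases were developed by Mazure and we refer the reader to \cite{Maz2005,Maz2005b} for a proof of the following results.'' So there is no in-paper proof to compare your attempt against; Proposition~\ref{prop:ECP_Bernstein} is quoted as background, and you yourself correctly note in your last paragraph that ``this is the technical content the paper defers to \cite{Maz2005,Maz2005b}.''

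That said, your outline is a reasonable sketch of how the cited results are proved, and you have correctly located the nontrivial step: in (ii)$\Rightarrow$(i), the positivity of the derivatives of the partial sums (equivalently, the strict monotonicity of the transition functions) is precisely what does not follow from the endpoint data alone and requires the full strength of having a Bernstein basis on \emph{every} subinterval. One small inaccuracy in your (i)$\Rightarrow$(ii) sketch: the intermediate $B_{\ell,m}$ are not obtained as primitives of differences $N_{\ell-1,m-1}-\alpha_\ell N_{\ell,m-1}$; rather, one first integrates each $N_{j,m-1}$ (suitably normalized) from $c$ to obtain the transition functions $f_{\ell,m}$, and then sets $B_{\ell,m}=f_{\ell,m}-f_{\ell+1,m}$, exactly as the paper does after Proposition~\ref{prop:fmc}. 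The telescoping to $1$ and the endpoint multiplicities then fall out directly.
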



In characterizing when a spline space is suitable for design purposes, the concept of knot insertion plays a key role.
A spline space $\hat{S}$ is said to be obtained from a spline space $S$ by knot insertion whenever $\hat{S}$ and $S$ have section spaces of the same dimension and $S\subset \hat{S}$.
In particular, when a new knot is inserted so as to increase the multiplicity of an existing knot, the related connection matrix must be updated by removing its last row and column. When a new knot is inserted in a location that does not correspond to any already existing knot, then the corresponding connection matrix must be the identity matrix.

The following proposition shows the strong link between ECP-spaces and spline spaces that are suitable for design.

\begin{prop}[Theorem 3.2 in \cite{Maz2011b}]
\label{prop:Mazure}
Given a piecewise Chebyshevian spline space $S(\bUU_m,\bDelta,\bR)$, with knots of zero multiplicity, which contains constants, the following properties are equivalent:
\begin{itemize}
\item[i)]\label{prop:Mazure_i} the space $DS(\bUU_m,\bDelta,\bR)$ is an ECP-space on $I$;
\item[ii)] \label{prop:Mazure_ii}$S(\bUU_m,\bDelta,\bR)$ is ``good for design" (meaning existence of blossoms);
\item[iii)] \label{prop:Mazure_iii}for any $[c,d]\subseteq [a,b]$, there exists a Bernstein basis in the restriction of $S(\bUU_m,\bDelta,\bR)$ to $[c,d]$;
\item[iv)] \label{prop:Mazure_iv}any spline space $S(\bUU_m,\bDelta,\bM,\bR)$ based on $S(\bUU_m,\bDelta,\bR)$ is ``good for design".
\end{itemize}
\end{prop}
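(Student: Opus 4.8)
The plan is to close the cycle of implications \emph{i)}$\Rightarrow$\emph{iv)}$\Rightarrow$\emph{ii)}$\Rightarrow$\emph{iii)}$\Rightarrow$\emph{i)}, leaning on the structural results already recorded — Propositions~\ref{prop:ECP_def}, \ref{prop:ECP_bernstein-like} and \ref{prop:ECP_Bernstein} — together with Mazure's characterization of ECP-spaces as exactly those spline spaces with knots of zero multiplicity that are generated by a system of piecewise weight functions. Two of the arrows are cheap. Since $S(\bUU_m,\bDelta,\bR)$ contains constants, the kernel of $D$ on this space is precisely the constants, so $\dim DS(\bUU_m,\bDelta,\bR)=m-1$ and the equivalence \emph{i)}$\Leftrightarrow$\emph{iii)} is nothing but Proposition~\ref{prop:ECP_Bernstein}. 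And \emph{iv)}$\Rightarrow$\emph{ii)} is immediate on taking the multiplicity vector $\bM=(0,\dots,0)$, for which $S(\bUU_m,\bDelta,\bM,\bR)=S(\bUU_m,\bDelta,\bR)$. It therefore remains to prove \emph{ii)}$\Rightarrow$\emph{iii)} and \emph{i)}$\Rightarrow$\emph{iv)}.

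For \emph{ii)}$\Rightarrow$\emph{iii)}, suppose $S(\bUU_m,\bDelta,\bR)$ possesses blossoms. On any $[c,d]\subseteq I$, running the de~Casteljau-type algorithm furnished by the blossom between the two endpoints $c$ and $d$ expresses every spline of the restricted space in terms of $m$ scalar coefficient functions $B_{1,m},\dots,B_{m,m}$; from the symmetry and the diagonal/pseudo-affine properties of the blossom one reads off that $B_{\ell,m}$ lies in the restricted space, vanishes exactly $\ell-1$ times at $c$ and $m-\ell$ times at $d$, is nonnegative, and — applying the representation to the constant function $1$, which belongs to the space — that $\sum_\ell B_{\ell,m}\equiv1$. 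These functions are linearly independent (they span an $m$-dimensional space), hence form a Bernstein basis on $[c,d]$; this is the classical passage from blossoming to the Bernstein--B\'ezier representation.

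The substantive step is \emph{i)}$\Rightarrow$\emph{iv)}, which I expect to be the main obstacle. From \emph{i)}, $DS(\bUU_m,\bDelta,\bR)$ is an $(m-1)$-dimensional ECP-space, so by Mazure's characterization it is generated by a system of $m-1$ piecewise weight functions $\{\hat w_0,\dots,\hat w_{m-2}\}$; prepending $w_0\equiv1$ and setting $w_j\coloneqq\hat w_{j-1}$ one checks, using that $S(\bUU_m,\bDelta,\bR)$ contains constants, that $S(\bUU_m,\bDelta,\bR)=ECP(w_0,\dots,w_{m-1})$, \ie the splines are exactly the piecewise $C^{m-1}$ functions $F$ for which $L_{m-1}F$ is constant on $I$ and every generalized derivative $L_jF$ is continuous across each $x_i$. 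The key bookkeeping observation is that raising the multiplicity of $x_i$ from $0$ to $m_i$ — which on the connection-matrix side of Definition~\ref{def:PCS} deletes the last $m_i$ rows and columns of $R_i$ — corresponds on the weight-function side to keeping only the matching conditions $L_jF^{[i]}(x_i)=L_jF^{[i-1]}(x_i)$ for $j=0,\dots,m-m_i-1$ and dropping the higher ones; this is because the lower-triangular matrices converting the frame $(D^0,\dots,D^{m-1})$ into $(L_0,\dots,L_{m-1})$ on the two sides of $x_i$ have leading principal blocks whose product and inverse behave as expected, so that the truncated $R_i$ is exactly the leading $(m-m_i)\times(m-m_i)$ block of $R_i$. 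Consequently $S(\bUU_m,\bDelta,\bM,\bR)$ is again an ECP-space generated by a system of piecewise weight functions and still contains constants, and so does its image under $D$ (generated by $\{w_1,\dots,w_{m-1}\}$ with the analogously reduced matching conditions). Hence, by the construction of blossoms available for any space generated by piecewise weight functions — the recursive de~Boor/de~Casteljau scheme written in terms of the operators $L_0,\dots,L_{m-1}$, which one shows to be pseudo-affine — the space $S(\bUU_m,\bDelta,\bM,\bR)$ admits blossoms, \ie \emph{iv)} holds (and, specialising $\bM=(0,\dots,0)$, the same scheme proves \emph{i)}$\Rightarrow$\emph{ii)} directly).

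The hard part is twofold. First, there is the bookkeeping of matching the connection-matrix formulation of Definition~\ref{def:PCS} with the generalized-derivative formulation of Proposition~\ref{prop:ECP_def} uniformly across knot insertion and arbitrary multiplicities, so that ``deleting rows and columns of $R_i$'' genuinely corresponds to ``dropping matching conditions on the $L_j$''. Second, and more seriously, there is the construction of blossoms for a space generated by weight functions — verifying that the recursive algorithm phrased via the $L_j$ is pseudo-affine in each argument — which is the technical core of Mazure's blossoming machinery: it is there that the piecewise Rolle theorem (legitimate here thanks to the lower-triangular, positive-diagonal structure of the connection matrices), an induction on the dimension $m$, and the closure of ECP-spaces under integration and under multiplication by positive piecewise functions are all brought to bear.
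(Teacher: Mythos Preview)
The paper does not prove this proposition at all: it is stated with the explicit attribution ``Theorem~3.2 in \cite{Maz2011b}'' and is followed immediately by the sentence ``A consequence of the latter proposition is that\dots'', with no intervening proof environment. The result is imported wholesale from Mazure's work and used as a black box (notably in the proof of Lemma~\ref{lem:wcoeff} and in the discussion surrounding Proposition~\ref{prop:ECP_wi_positive}). There is therefore no ``paper's own proof'' against which to compare your attempt.

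That said, your sketch is a reasonable outline of how the result is established in the cited literature: the equivalence \emph{i)}$\Leftrightarrow$\emph{iii)} is indeed Proposition~\ref{prop:ECP_Bernstein}, and the route through weight functions for \emph{i)}$\Rightarrow$\emph{iv)} and through the blossom-to-Bernstein construction for \emph{ii)}$\Rightarrow$\emph{iii)} follows Mazure's program. One caution: your treatment of \emph{i)}$\Rightarrow$\emph{iv)} conflates two different things. A space $S(\bUU_m,\bDelta,\bM,\bR)$ with genuinely positive multiplicities is \emph{not} itself an ECP-space in the sense of Definition~\ref{def:ECP} (its dimension exceeds $m$), so the sentence ``$S(\bUU_m,\bDelta,\bM,\bR)$ is again an ECP-space generated by a system of piecewise weight functions'' is not correct as stated. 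What one actually shows is that the same system $\{w_1,\dots,w_{m-1}\}$ furnishes the generalized derivatives needed to run the blossoming/B-spline construction in the larger spline space; the ECP property pertains to $DS(\bUU_m,\bDelta,\bR)$, not to the spline space with positive multiplicities. This is a genuine slip in your bookkeeping, though the overall strategy is sound.
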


A consequence of the latter proposition is that, if any of the Properties \emph{i)--iv)} holds, then
the Bernstein basis with respect to any subinterval $[c,d]\subseteq[a,b]$ is the ONTP basis in the restriction of $S(\bUU_m,\bDelta,\bR)$ to such interval.
Moreover, the B-spline basis in any spline space mentioned in \emph{iv)} does exist and is the ONTP basis.

The above proposition and the results previously recalled allow us to say that, whenever it is possible to find a system of piecewise weight functions associated with the space $DS(\bUU_m,\bDelta,\bR)$,
both $S(\bUU_m,\bDelta,\bR)$ and $DS(\bUU_m,\bDelta,\bR)$ are ECP-spaces and thus
both $S(\bUU_m,\bDelta,\bR)$ and any spline space with knots of arbitrary multiplicity based on it are suitable for design.
More precisely, if $DS(\bUU_m,\bDelta,\bR)= ECP(w_1,\dots,w_{m-1})$, then $S(\bUU_m,\bDelta,\bR) = ECP(\mathbbm{1},w_1,\dots,w_{m-1})$.

In the next section we propose a practical method to construct the weight functions associated with a given spline space with knots of zero multiplicity and to determine, depending on their existence, whether the space in question is an ECP-space.

\section{A simple process to construct the weight functions}
\label{sec:suff_cond}

Referring to the same setting and notation introduced in the previous section,
in the remainder of the paper we assume that,
for all $i=0,\dots,k$, $\UU_{i,m}$ is an $m$-dimensional EC-space which contains constants and that
$D\UU_{i,m}$ is an $(m-1)$-dimensional EC-space.
By the non-piecewise version of Proposition \ref{prop:ECP_Bernstein} \cite{CMP2003},
these requirements are equivalent to the existence of the Bernstein basis of each $\UU_{i,m}$ on $I_i$.
In addition, it can be seen from Proposition \ref{prop:Mazure} that they are necessary conditions for constructing spline spaces suitable for design.

\begin{defn}[Transition functions]\label{def:trans_function}
Let $S(\bUU_m,\bDelta,\bR)$ be a piecewise Chebyshevian spline space containing constants.
We call \emph{transition functions relative to $[a,b]$} the functions $f_{\ell,m} \in S(\bUU_m,\bDelta,\bR)$, such that
$f_{1,m}\equiv 1$ and $f_{\ell,m}$, $\ell=2,\dots,m$,
satisfies
\begin{equation}\label{eq:cond_f}
\begin{alignedat}{3}
& D^r f_{\ell,m}(a)=0, &\qquad& r=0,\dots,\ell-2,\\
& D^r f_{\ell,m}(b)= \delta_{r,0}, &\qquad& r=0,\dots,m-\ell.
\end{alignedat}
\end{equation}
Furthermore, for any interval $[c,d]\subset [a,b]$, we call transition functions relative to $[c,d]$
the functions $f_{\ell,m}$ in the restriction of $S(\bUU_m,\bDelta,\bR)$ to $[c,d]$ that satisfy \eqref{eq:cond_f} at
$c$ and $d$.
\end{defn}



\smallskip
For a given spline space $S(\bUU_m,\bDelta,\bR)$,
each transition function $f_{\ell,m}$, $\ell=2,\dots,m$ relative to $[a,b]$ can be determined as the solution of a suitable linear system.
In particular, let $\UU_{i,m}$ be the space spanned by the functions $\{u^{[i]}_{1,m},u^{[i]}_{2,m},\dots,u^{[i]}_{m,m}\}$, with $u^{[i]}_{1,m}=\mathbbm{1}$, and let $f_{\ell,m}^{[i]}\in \UU_{i,m}$ be the restriction of $f_{\ell,m}$ to the interval $I_i$, $i=0,\dots,k$.
Therefore, there will be coefficients such that $f_{\ell,m}^{[i]}(x)=\sum\nolimits_{h=1}^{m}b^{[i]}_{h,\ell,m} u^{[i]}_{h,m}(x)$, $x\in [x_i,x_{i+1}]$.
By imposing conditions \eqref{eq:cond_f} at $a$ and $b$ and by requiring that, for $i=1,\dots,k$,
\begin{equation}
\left(D^0 f_{\ell,m}^{[i]}(x_i),D^1 f_{\ell,m}^{[i]}(x_i),\dots,D^{(m-1)} f_{\ell,m}^{[i]}(x_i)\right)^T
= R_i
\left(D^0 f_{\ell,m}^{[i-1]}(x_i),D^1 f_{\ell,m}^{[i-1]}(x_i),\dots,D^{(m-1)} f_{\ell,m}^{[i-1]}(x_i)\right)^T, 
\end{equation}
we get the linear system
\begin{equation}\label{eq:linear_system}
\matr{A}\vect{b}=\vect{c},
\end{equation}
with
\[
\matr{A} \coloneqq
\begin{pmatrix}
\hspace{2.1ex}\tilde{\matr{A}}_0(x_{0}) & & & & \\
R_1\matr{A}_0(x_{1}) & \hspace{0.8ex}-\matr{A}_1(x_{1}) & & & \\
 & \hspace{0ex} R_2\matr{A}_1(x_{2}) & -\matr{A}_2(x_{2}) & & \\
 & & \ddots & \ddots & \\
 & & & R_k\matr{A}_{k-1}(x_{k}) & -\matr{A}_{k}(x_{k}) \\
 & & & & \hspace{3ex} \tilde{\matr{A}}_{k}(x_{k+1}) \\
\end{pmatrix},
\]
\[
\vect{b} \coloneqq (b_{1,\ell,m}^{[0]},\dots,b_{m,\ell,m}^{[0]},\dots,b_{1,\ell,m}^{[k]},\dots,b_{m,\ell,m}^{[k]})^T,
\qquad
\vect{c} \coloneqq (0,\dots,0,1,\underbrace{0,\dots,0}_{m-\ell\text{ times}})^T.
\]

For all $i=1,\dots,k$, the blocks $\matr{A}_{h}(x_i)$, $h=i-1,i$,
have $r$th row equal to
$\left(D^{r-1}u^{[h]}_{1,m}(x_{i}), \dots, D^{r-1}u^{[h]}_{m,m}(x_{i}) \right)$, $r=1,\dots,m$.
The two blocks $\tilde{\matr{A}}_0(x_{0})$ and $\tilde{\matr{A}}_{k}(x_{k+1})$ are sub-matrices of $A_0(x_0)$ and $A_k(x_{k+1})$ of dimension $(\ell-1)\times m$ and $(m-\ell+1)\times m$ respectively.

By definition each transition function is determined as the solution of an Hermite interpolation problem in $m$-data in the $m$-dimensional spline space $S(\bUU_m,\bDelta,\bR)$. Therefore, it is always possible to find a set of transition functions $f_{\ell,m}$, $\ell = 2,\dots,m$ when $S(\bUU_m,\bDelta,\bR)$ is an ECP-space.
When $DS(\bUU_m,\bDelta,\bR)$ is an ECP-space too, then each $f_{\ell,m}$, $\ell=2,\dots,m$ vanishes at $a$ or $b$ exactly as many times as required by Definition \ref{def:trans_function} and therefore the transition functions are linearly independent.
On the other hand, it shall be noted that the systems \eqref{eq:linear_system} may have a unique solution, and thus we may be able to compute all the transition functions, also when $S(\bUU_m,\bDelta,\bR)$ is not an ECP-space. Moreover, also when $DS(\bUU_m,\bDelta,\bR)$ is not an ECP-space, we may find a set of linearly independent transition functions.

The following characterization holds.

\begin{prop}\label{prop:fmc}
Let $S(\bUU_m,\bDelta,\bR)$ be a piecewise Chebyshevian spline space containing constants and suppose that $DS(\bUU_m,\bDelta,\bR)$ is an ECP-space. Then the transition functions $f_{\ell,m}$, $\ell=2,\dots,m$ relative to any $[c,d]\subseteq[a,b]$ are monotonically increasing, and the set $\{Df_{\ell,m}, \ell=2,\dots,m\}$ is a Bernstein-like basis in the restriction of $DS(\bUU_m,\bDelta,\bR)$ to $[c,d]$.
\end{prop}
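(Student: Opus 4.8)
Here is how I would prove Proposition~\ref{prop:fmc}.

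The plan is to analyse the functions $g_\ell\coloneqq Df_{\ell,m}$, $\ell=2,\dots,m$, as elements of the restriction of $DS(\bUU_m,\bDelta,\bR)$ to $[c,d]$, and to show that each $g_\ell$ has precisely the zero pattern demanded of the $(\ell-1)$-th member of a Bernstein-like basis; both assertions of the proposition then follow immediately. I would start by noting that, since $DS(\bUU_m,\bDelta,\bR)$ is an ECP-space, the results recalled in Section~\ref{sec:context} --- all of which concern an arbitrary subinterval --- guarantee that the restrictions of $S(\bUU_m,\bDelta,\bR)$ and of $DS(\bUU_m,\bDelta,\bR)$ to $[c,d]$ are ECP-spaces of dimensions $m$ and $m-1$, respectively. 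In particular the Hermite conditions \eqref{eq:cond_f} have a unique solution $f_{\ell,m}$ in the restricted spline space; this $f_{\ell,m}$ is continuous and, because $f_{\ell,m}(c)=0\neq1=f_{\ell,m}(d)$, non-constant, so that $g_\ell$ is a nonzero element of the $(m-1)$-dimensional ECP-space $DS(\bUU_m,\bDelta,\bR)|_{[c,d]}$.

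Next I would differentiate the interpolation data. From \eqref{eq:cond_f} one gets $D^s g_\ell(c)=D^{s+1}f_{\ell,m}(c)=0$ for $s=0,\dots,\ell-3$ and $D^s g_\ell(d)=D^{s+1}f_{\ell,m}(d)=0$ for $s=0,\dots,m-\ell-1$, i.e.\ $g_\ell$ vanishes at $c$ with multiplicity at least $\ell-2$ and at $d$ with multiplicity at least $m-\ell$ (one of these being an empty requirement when $\ell=2$ or $\ell=m$). These two lower bounds sum to $(\ell-2)+(m-\ell)=m-2$, which is one unit below $\dim DS(\bUU_m,\bDelta,\bR)$; since a nonzero element of an $(m-1)$-dimensional ECP-space has at most $m-2$ zeros, the bounds must be attained exactly, so $g_\ell$ vanishes exactly $\ell-2$ times at $c$, exactly $m-\ell$ times at $d$, and has no zero in the open interval $(c,d)$.

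The last and, I expect, most delicate point is to fix the sign of $g_\ell$ on $(c,d)$. Being zero-free, $g_\ell$ keeps a constant sign on each subinterval cut out by $\bDelta$; moreover it cannot reverse sign across an interior knot $x_i$, because the standing hypotheses on the connection matrices (lower triangular with positive diagonal, hence positive $(2,2)$-entry) force $g_\ell^{[i]}(x_i)$ and $g_\ell^{[i-1]}(x_i)$ to share the same sign --- this is exactly the device underlying the validity of Rolle's theorem in the piecewise setting. Hence $g_\ell$ has a single sign throughout $(c,d)$, and since $f_{\ell,m}$ is continuous, integrating $g_\ell$ over $[c,d]$ yields $f_{\ell,m}(d)-f_{\ell,m}(c)=1>0$, so that sign is positive: $Df_{\ell,m}>0$ on $(c,d)$ and $f_{\ell,m}$ is strictly increasing on $[c,d]$. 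For the second statement I would then set $B_{p,m-1}\coloneqq Df_{p+1,m}$ for $p=1,\dots,m-1$: by what precedes, $B_{p,m-1}$ vanishes exactly $p-1$ times at $c$ and exactly $(m-1)-p$ times at $d$ and is positive on $(c,d)$, so the family $\{B_{p,m-1}\}_{p=1}^{m-1}$ satisfies Definition~\ref{def:Bernst_basis} inside $DS(\bUU_m,\bDelta,\bR)|_{[c,d]}$; it is a genuine basis because its members have strictly increasing vanishing order at $c$, so that a vanishing linear combination, differentiated repeatedly at $c$, has all its coefficients equal to zero, and because there are $m-1=\dim DS(\bUU_m,\bDelta,\bR)|_{[c,d]}$ of them. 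Therefore $\{Df_{\ell,m},\ \ell=2,\dots,m\}$ is a Bernstein-like basis in the restriction of $DS(\bUU_m,\bDelta,\bR)$ to $[c,d]$, which completes the argument.
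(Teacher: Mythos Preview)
Your proof is correct and follows essentially the same approach as the paper's: count the endpoint zeros of $Df_{\ell,m}$, use the ECP zero bound $m-2$ to rule out any further zeros, and infer positivity from $f_{\ell,m}(c)=0$, $f_{\ell,m}(d)=1$. You simply supply more detail---explicit existence of the $f_{\ell,m}$, the sign-preservation across knots via the positive $(2,2)$-entry of $R_i$, and the linear-independence argument---all of which the paper leaves implicit.
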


\begin{proof}
Since $DS(\bUU_m,\bDelta,\bR)$ is an $(m-1)$-dimensional ECP-space, any nonzero function contained in it can have at most $m-2$ zeros.
In particular, for any $\ell=2,\dots,m$, $Df_{\ell,m}$ belongs to $DS(\bUU_m,\bDelta,\bR)$, vanishes $\ell-2$ times at $c$ and $m-\ell$ times at $d$, and therefore cannot be zero anywhere else in $[c,d]$. Moreover, since $f_{\ell,m}$ takes the values $0$ and $1$ respectively at $c$ and $d$, $Df_{\ell,m}$ is positive in $(c,d)$. There follows that the functions $Df_{\ell,m}$, $\ell=2,\dots,m$ form a Bernstein-like basis in the restriction of $DS(\bUU_m,\bDelta,\bR)$ to $[c,d]$.
\end{proof}

Under the hypotheses of the above proposition, it can be verified that
the set $B_{\ell,m}=f_{\ell,m}-f_{\ell+1,m}$, $\ell=1,\dots,m-1$, $B_{m,m}=f_{m,m}$ is a Bernstein basis in the restriction of $S(\bUU_m,\bDelta,\bR)$ to any $[c,d]\subseteq [a,b]$, namely it fulfills the properties in Definition \ref{def:Bernst_basis}. As a consequence, such basis is the ONTP basis.


For a given space $S(\bUU_m,\bDelta,\bR)$, under the assumption that the transition functions $f_{\ell,m}$, $\ell=1,\dots,m$, relative to $[a,b]$, are linearly independent, we consider the sequence of functions $w_j$, $j=0,\dots,m-1$, where $w_0 = 1$ and, for all $j=1,\dots,m-1$, $w_j$ is constructed recursively by the formula
\begin{equation}\label{eq:wi}
w_{j} = \sum_{\ell=2}^{m-j+1} D f_{\ell,m-j+1},
\end{equation}
\begin{equation}\label{eq:fimj}
f_{\ell,m-j}=\frac{\sum_{h = \ell+1}^{m-j+1} Df_{h,m-j+1}}{w_{j}}, \quad \ell = 1,\dots,m-j.
\end{equation}

If the functions $w_j$ generated by \eqref{eq:wi} are positive for all $j=1,\dots,m-1$, then the set $\{f_{\ell,m-j}, \ell=1,\dots,m-j\}$ computed through \eqref{eq:fimj} is well-defined and made of linearly independent functions.
In particular $\{f_{\ell,m-j}, \ell=1,\dots,m-j\}$ are the transition functions for the space $L_jS(\bUU_m,\bDelta,\bR)\coloneqq \{L_jF \, | \, F \in S(\bUU_m,\bDelta,\bR)\}$.

It can also be verified that,
for any $F\in S(\bUU_m,\bDelta,\bR)$, the functions $w_j$ generated by \eqref{eq:wi} are such that $L_jF^{[i-1]}(x_i)=L_jF^{[i]}(x_i)$, for all $i=1,\dots,k$. 
Moreover, by applying \eqref{eq:fimj} with $j=m-1$, it can be seen that $L_{m-1}S(\bUU_m,\bDelta,\bR)$ is the space spanned by $f_{1,1}\equiv1$.
There follows that $S(\bUU_m,\bDelta,\bR)$ is an ECP-space (see Proposition \ref{prop:ECP_def}).
In addition, all the intermediate spaces $L_jS(\bUU_m,\bDelta,\bR)$, $j=1,\dots,m-2$ are ECP-spaces and in particular $DS(\bUU_m,\bDelta,\bR)=w_1 L_1S(\bUU_m,\bDelta,\bR)$ is an ECP-space too.
The above observations can be summarized in the following proposition.

\begin{prop}\label{prop:wi_positive_ECP}
Let $S(\bUU_m,\bDelta,\bR)$ be a spline space containing constants and suppose that all the functions $w_j$ $j=1,\dots,m-1$, generated by formula \eqref{eq:wi} are positive. Then $S(\bUU_m,\bDelta,\bR)$ is an ECP-space associated with the system of piecewise weight functions $\{\mathbbm{1},w_1,\dots,w_{m-1}\}$.
Moreover, $DS(\bUU_m,\bDelta,\bR)$ is an ECP-space associated with $\{w_1,\dots,w_{m-1}\}$.
\end{prop}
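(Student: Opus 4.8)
The plan is to verify, one by one, the three hypotheses of Proposition~\ref{prop:ECP_def} for the system $\{\mathbbm{1},w_1,\dots,w_{m-1}\}$ with its associated generalized derivatives $L_0,\dots,L_{m-1}$ as defined in \eqref{eq:der_gen}, and then invoke that proposition to conclude that $S(\bUU_m,\bDelta,\bR)$ is the stated ECP-space; the assertion on $DS(\bUU_m,\bDelta,\bR)$ will then follow from the identity $DS(\bUU_m,\bDelta,\bR)=w_1\,L_1 S(\bUU_m,\bDelta,\bR)$ together with the closure of the ECP class under multiplication by positive piecewise functions. First I would set up an induction on $j$, whose inductive statement packages together three facts: (a) $w_j$ is a well-defined positive piecewise $C^{m-j-1}$ function (positivity is the hypothesis; the regularity follows from \eqref{eq:wi} since each $Df_{h,m-j+1}$ is piecewise $C^{m-j-1}$); (b) the functions $\{f_{\ell,m-j},\ \ell=1,\dots,m-j\}$ produced by \eqref{eq:fimj} are exactly the transition functions for the space $L_j S(\bUU_m,\bDelta,\bR)$, which is therefore $(m-j)$-dimensional with $f_{1,m-j}\equiv 1$; and (c) for every $F\in S(\bUU_m,\bDelta,\bR)$ one has $L_jF^{[i-1]}(x_i)=L_jF^{[i]}(x_i)$ for all $i=1,\dots,k$.

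For the base case $j=0$ everything is immediate: $w_0=\mathbbm{1}$, $L_0F=F$, the $f_{\ell,m}$ are the transition functions of $S(\bUU_m,\bDelta,\bR)$ by hypothesis, and the matching of left and right derivatives at the knots holds because $F\in S(\bUU_m,\bDelta,\bR)$ satisfies \eqref{eq:PCn} with $R_i$ having first row $(1,0,\dots,0)$, so the $0$th-order value matches across each $x_i$. For the inductive step, assuming the statement at level $j-1$, I would argue as follows. Writing $g_{\ell}\coloneqq Df_{\ell,m-j+1}$ for $\ell=2,\dots,m-j+1$, the inductive hypothesis (b) says these span $DL_{j-1}S(\bUU_m,\bDelta,\bR)$; then $w_j=\sum_{\ell=2}^{m-j+1}g_\ell$ and $w_jf_{\ell,m-j}=\sum_{h=\ell+1}^{m-j+1}g_h$, so applying $D$ to \eqref{eq:fimj} and using $L_jF=\frac{1}{w_j}DL_{j-1}F$ gives
\[
Df_{\ell,m-j}=L_j f_{\ell,m-j+1},\qquad \ell=1,\dots,m-j,
\]
which shows $\{f_{\ell,m-j}\}$ lies in $L_jS(\bUU_m,\bDelta,\bR)$. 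To see these are precisely its transition functions, I would check the endpoint vanishing/normalization conditions \eqref{eq:cond_f}: from the telescoping form $w_jf_{\ell,m-j}=\sum_{h=\ell+1}^{m-j+1}g_h$ and the known behaviour of the $f_{h,m-j+1}$ at $a$ and $b$ (they vanish to the prescribed orders, with $f_{m-j+1,m-j+1}(b)=1$), a direct computation of successive derivatives at $a$ and $b$ — using positivity and the regularity of $w_j$ so no spurious zeros or poles are introduced — yields exactly the required orders of contact, and linear independence follows since the resulting vanishing pattern is a triangular one. Property (c) at level $j$ is obtained by differentiating the relation $L_{j-1}F^{[i-1]}(x_i)=L_{j-1}F^{[i]}(x_i)$: since this holds identically near $x_i$ on each side in the appropriate one-sided sense and $w_j$ is continuous and positive at $x_i$, dividing $DL_{j-1}F$ by $w_j$ preserves the matching, giving $L_jF^{[i-1]}(x_i)=L_jF^{[i]}(x_i)$; one must here keep careful track of the one-sided derivatives and use that the lower-triangular, positive-diagonal connection matrices ensure the two one-sided generalized derivatives of a spline agree, as recalled after Definition~\ref{def:PCS}.

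Once the induction is complete, the three conditions of Proposition~\ref{prop:ECP_def} are in hand: at $j=m-1$, formula \eqref{eq:fimj} gives $f_{1,1}\equiv 1$ spanning $L_{m-1}S(\bUU_m,\bDelta,\bR)$, so $L_{m-1}F$ is constant on $I$ for every $F\in S(\bUU_m,\bDelta,\bR)$; property (c) across all $j$ supplies exactly the knot conditions $L_jF^{[i]}(x_i)=L_jF^{[i-1]}(x_i)$; and the $w_j$ form a system of piecewise weight functions by (a). Hence $S(\bUU_m,\bDelta,\bR)=ECP(\mathbbm{1},w_1,\dots,w_{m-1})$. Finally, since $L_1S(\bUU_m,\bDelta,\bR)=ECP(w_2,\dots,w_{m-1})$ is an $(m-1)$-dimensional ECP-space by the same proposition applied one level down, and $DF=w_1\,L_1F$, the space $DS(\bUU_m,\bDelta,\bR)=w_1\,L_1S(\bUU_m,\bDelta,\bR)$ is an ECP-space associated with $\{w_1,\dots,w_{m-1}\}$, using that the ECP class is closed under multiplication by the positive piecewise function $w_1$.

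I expect the main obstacle to be the careful bookkeeping in step (b): showing that the functions defined by the quotient \eqref{eq:fimj} genuinely have the transition-function contact orders at $a$ and $b$, since this requires computing several one-sided derivatives of a product/quotient at the endpoints and confirming that multiplication by the positive, sufficiently smooth $w_j$ neither destroys a prescribed zero nor creates an extra one — and likewise at the interior knots for property (c), where the interplay between one-sided derivatives and the connection matrices must be handled with the conventions set after Definition~\ref{def:PCS}. The positivity of the $w_j$ (the standing hypothesis) is precisely what makes all of this go through cleanly.
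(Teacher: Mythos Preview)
Your proposal follows exactly the route the paper takes: the paper's argument is the discussion immediately preceding the proposition, which verifies the two conditions of Proposition~\ref{prop:ECP_def} by observing that (under positivity of the $w_j$) the functions $f_{\ell,m-j}$ from \eqref{eq:fimj} are the transition functions of $L_jS(\bUU_m,\bDelta,\bR)$, that $L_jF^{[i-1]}(x_i)=L_jF^{[i]}(x_i)$ for all $F$, and that $L_{m-1}S$ is spanned by $f_{1,1}\equiv 1$, then concludes $DS=w_1\,L_1S$ is an ECP-space. Two small points to tighten in your write-up: the displayed identity $Df_{\ell,m-j}=L_j f_{\ell,m-j+1}$ is not correct as stated (what you actually need, and what follows directly from \eqref{eq:fimj}, is $f_{\ell,m-j}\in \frac{1}{w_j}DL_{j-1}S=L_jS$); and for (c) you cannot ``differentiate'' the pointwise equality $L_{j-1}F^{[i-1]}(x_i)=L_{j-1}F^{[i]}(x_i)$ --- the clean argument is that since $w_j\in DL_{j-1}S$, both $DL_{j-1}F$ and $w_j$ pick up the same multiplicative factor $(R_i)_{j+1,j+1}$ when passing from the left to the right limit at $x_i$ (this is where the lower-triangular structure of the connection matrices is used), so the ratio $L_jF=DL_{j-1}F/w_j$ is continuous there.
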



\begin{rem}
Another way to see that $S(\bUU_m,\bDelta,\bR)$ is the ECP-space associated with the functions $w_j$, $j=1,\dots,m-1$ in \eqref{eq:wi}, subject to their positivity, is to consider the set of functions
\begin{equation}\label{eq:canonical_basis}
\begin{aligned}
\psi_1(x) &= w_0(x)\\
\psi_2(x) &= w_0(x) \int_a^{x}w_1(\xi_1) d\xi_1,\\
\psi_{r+1}(x) &= w_0(x) \int_a^{x}w_1(\xi_1) \int_a^{\xi_1} \dots \int_a^{\xi_{r-1}}w_{r}(\xi_r) d\xi_r \dots d\xi_2 d\xi_1, \qquad r=2,\dots,m-1.
\end{aligned}
\end{equation}
By substituting \eqref{eq:wi} and \eqref{eq:fimj} into the above expressions and recalling that, by definition, $f_{\ell,m-j}(a)=0$, $\ell=2,\dots,m-j$, we get
\begin{equation}\label{eq:canonical_basis_f}
\psi_1(x) = 1, \qquad
\psi_2(x) = \sum_{\ell=2}^m f_{\ell,m}(x), \qquad
\psi_{r+1}(x) = \sum_{\ell=r+1}^m \binom{\ell-2}{r-1} f_{\ell,m}(x), \quad r=2,\dots,m-1,
\end{equation}
where, in particular, $\psi_{m}=f_{m,m}$. Therefore $\{\psi_j, j=1,\dots,m\}$ is a canonical basis for $S(\bUU_m,\bDelta,\bR)$.
\end{rem}

The following Proposition \ref{prop:ECP_wi_positive} shows that, when the space $DS(\bUU_m,\bDelta,\bR)$ is an ECP-space, then formula \eqref{eq:wi} always yields an associated system of piecewise weight functions. As a consequence, it turns out that, when at least one of the functions $w_j$ is nonpositive, then both $DS(\bUU_m,\bDelta,\bR)$ and $S(\bUU_m,\bDelta,\bR)$ cannot be ECP-spaces.
A preliminary lemma is needed to prove the main result.

\begin{lem}\label{lem:wcoeff}
Let $S(\bUU_m,\bDelta,\bR)$ be a piecewise Chebyshevian spline space containing constants and suppose that $DS(\bUU_m,\bDelta,\bR)$ is an ECP-space. Then any function $w$ which has positive coefficients in a Bernstein-like basis of $DS(\bUU_m,\bDelta,\bR)$ on $[a,b]$ can be represented with positive coefficients in a Bernstein-like basis in the restriction of $DS(\bUU_m,\bDelta,\bR)$ to any $[c,d]\subset [a,b]$.
\end{lem}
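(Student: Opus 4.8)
The plan is to transfer the problem from the derivative space $DS(\bUU_m,\bDelta,\bR)$ to the space $S(\bUU_m,\bDelta,\bR)$ itself, where blossoms are available, and then to reduce the assertion to the preservation of strict monotonicity of a B\'ezier control polygon under restriction to a subinterval.

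First I would normalize the data. Since $DS(\bUU_m,\bDelta,\bR)$ is an ECP-space, Proposition~\ref{prop:fmc} identifies $\{Df_{\ell,m},\ \ell=2,\dots,m\}$, with $f_{\ell,m}$ the transition functions relative to a given $[c,d]\subseteq[a,b]$, as a Bernstein-like basis of the restriction of $DS(\bUU_m,\bDelta,\bR)$ to $[c,d]$. A Bernstein-like basis of an ECP-space is unique up to multiplication of each element by a positive constant (each element solves an Hermite interpolation problem fixing its vanishing orders at the two endpoints, up to a scalar, and the interior positivity fixes the sign); rescaling does not affect the signs of coefficients, so I may assume the given Bernstein-like basis of $DS(\bUU_m,\bDelta,\bR)$ on $[a,b]$ is $\{Df_{\ell,m}^{[a,b]}\}$ and I only need to produce positive coefficients with respect to $\{Df_{\ell,m}^{[c,d]}\}$. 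Writing $w=\sum_{\ell=2}^{m}c_\ell\,Df_{\ell,m}^{[a,b]}$ with every $c_\ell>0$, put $F:=\sum_{\ell=2}^{m}c_\ell f_{\ell,m}^{[a,b]}\in S(\bUU_m,\bDelta,\bR)$, so $w=DF$, $F(a)=0$, and $w>0$ on $[a,b]$. By Proposition~\ref{prop:Mazure}, $S(\bUU_m,\bDelta,\bR)$ is good for design, hence possesses blossoms and, on every subinterval, a Bernstein basis $\{B_{h,m}^{[c,d]},\ h=1,\dots,m\}$; the telescoping relation $B_{h,m}=f_{h,m}-f_{h+1,m}$ (with $B_{m,m}=f_{m,m}$) gives $f_{\ell,m}^{[c,d]}=\sum_{h=\ell}^{m}B_{h,m}^{[c,d]}$.

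Next I would translate the claim into a statement about the control polygon of $F$. Expanding $F|_{[c,d]}=\sum_{\ell=1}^{m}\tilde c_\ell f_{\ell,m}^{[c,d]}$ and substituting the telescoping relation shows that the B\'ezier coefficients $\beta_h$ of $F$ on $[c,d]$ are the partial sums $\beta_h=\sum_{\ell=1}^{h}\tilde c_\ell$, so $\tilde c_h=\beta_h-\beta_{h-1}$ for $h=2,\dots,m$; differentiating the same expansion and using $Df_{1,m}=0$ shows these $\tilde c_h$ are exactly the coefficients of $w|_{[c,d]}$ in $\{Df_{h,m}^{[c,d]}\}$. The identical computation on $[a,b]$ gives $c_h=\beta^{[a,b]}_h-\beta^{[a,b]}_{h-1}$ with $\beta^{[a,b]}_1=F(a)=0$. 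Hence the hypothesis ``$c_h>0$ for all $h$'' says precisely that the control polygon of $F$ on $[a,b]$ is strictly increasing, and what remains to be proved is that the control polygon of $F$ on $[c,d]$ is strictly increasing.

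Finally, this reduced statement follows from the corner-cutting structure of a space good for design: the control polygon of $F$ relative to $[c,d]\subseteq[a,b]$ is obtained from the one relative to $[a,b]$ by successive knot insertions at $c$ and at $d$, and each such step replaces consecutive control points by convex combinations whose ratios lie strictly in $(0,1)$ when the inserted abscissa is interior to the current interval; a corner-cutting step of this kind carries a strictly increasing sequence to a strictly increasing sequence. Equivalently one may argue by monotonicity of blossoms, writing $\beta_h=\sigma_F(c^{[m-h]},d^{[h-1]})$: since $DF=w>0$ on $(a,b)$, the blossom $\sigma_F$ is strictly increasing in each of its arguments over $[a,b]^{m-1}$, so passing from $\beta_{h-1}$ to $\beta_h$, i.e.\ replacing one copy of $c$ by one copy of $d$ with $c<d$, strictly increases the value. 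Either way one gets $w|_{[c,d]}=\sum_{h=2}^{m}\tilde c_h\,Df_{h,m}^{[c,d]}$ with all $\tilde c_h>0$, which is the assertion. The main obstacle is exactly this last step: a clean justification that restriction preserves \emph{strict} monotonicity of the control polygon, which rests on the strict positivity of the corner-cutting ratios for interior knots (equivalently, on strict monotonicity of the blossom where $DF>0$); the degenerate cases $c=a$ or $d=b$, where the corresponding insertions are trivial, should be noted but are immediate.
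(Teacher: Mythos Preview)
Your proof is correct and follows the same route as the paper: lift $w$ to an antiderivative in $S(\bUU_m,\bDelta,\bR)$, observe that its Bernstein coefficients on $[a,b]$ are strictly increasing, argue that this persists on restriction to $[c,d]$, and differentiate back. The only difference is in how the persistence step is justified---the paper invokes the ONTP property of the Bernstein basis on $[c,d]$ rather tersely, whereas you spell out the corner-cutting/blossom argument explicitly; these are two readings of the same underlying fact.
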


\begin{proof}
By Proposition \ref{prop:fmc},
since $w$ has positive coefficients in a Bernstein-like basis of  $DS(\bUU_m,\bDelta,\bR)$, it can be represented as
$w=\sum_{i=1}^{m-1} \alpha_i Df_{i+1,m}$, where $\alpha_i>0$, for all $i=1,\dots,m-1$ and $f_{i,m}$, $i=1,\dots,m$ are the transition functions of $S(\bUU_m,\bDelta,\bR)$ relative to $[a,b]$.  Let $u$ be a function such that $Du=w$. Without loss of generality, we can write
\begin{equation}
u = \sum_{i=1}^m \alpha_i f_{i,m} = \sum_{i=1}^m \alpha_i \sum_{\ell=i}^m B_{\ell,m} = \sum_{i=1}^m \sum_{\ell=1}^i \alpha_\ell  B_{i,m},
\end{equation}
where $\{B_{i,m},\, i=1,\dots,m\}$ is the Bernstein basis of $S(\bUU_m,\bDelta,\bR)$.
There follows that, in such basis, $u$ has increasing coefficients.

As a consequence of Proposition \ref{prop:Mazure}, for any $[c,d]\subseteq [a,b]$ the Bernstein basis in the restriction of $S(\bUU_m,\bDelta,\bR)$ to $[c,d]$ is an ONTP basis. Thus, the function $u$ has increasing coefficients in the Bernstein basis $\{\hat{B}_{i,m}, \, i=1,\dots,m\}$ in the restriction of $S(\bUU_m,\bDelta,\bR)$ to any $[c,d]\subset [a,b]$. In particular, let
\begin{equation}
\restr{u}{[c,d]}= \sum_{i=1}^m \gamma_i \hat{B}_{i,m}, \quad \gamma_{i+1}-\gamma_i>0, \; \text{for all } i,
\end{equation}
where $\hat{B}_{i,m}= \hat{f}_{i,m}-\hat{f}_{i+1,m}$, $i=1,\dots,m-1$, $\hat{B}_{m,m}=\hat{f}_{m,m}$ and $\hat{f}_{i,m}$ are the transition functions relative to $[c,d]$.
By Proposition \ref{prop:fmc}, the transition functions are monotonically increasing and their derivatives are a Bernstein-like basis in the restriction of $DS(\bUU_m,\bDelta,\bR)$ to $[c,d]$. The statement then follows by observing that
\begin{equation}
\restr{Du}{[c,d]}= \restr{w}{[c,d]}= \sum_{i=1}^{m-1} (\gamma_{i+1}-\gamma_{i}) D\hat{f}_{i+1,m}.
\end{equation}
\end{proof}

\begin{prop}\label{prop:ECP_wi_positive}
Let $S(\bUU_m,\bDelta,\bR)$ be a piecewise Chebyshevian spline space containing constants and suppose that $DS(\bUU_m,\bDelta,\bR)$ is an ECP-space. Then the sequence of functions $\{\mathbbm{1},w_1,\dots,w_{m-1}\}$ determined by \eqref{eq:wi}--\eqref{eq:fimj} is a system of piecewise weight functions associated with $S(\bUU_m,\bDelta,\bR)$. Moreover the sequence $\{w_1,\dots,w_{m-1}\}$ is a system of piecewise weight functions associated with $DS(\bUU_m,\bDelta,\bR)$.
\end{prop}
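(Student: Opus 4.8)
The plan is to prove the statement by induction on $m$, using Lemma~\ref{lem:wcoeff} as the engine that transfers positivity from the global interval $[a,b]$ to every subinterval, and using Proposition~\ref{prop:fmc} to identify the derivatives of the transition functions with a Bernstein-like basis of $DS(\bUU_m,\bDelta,\bR)$. The base case $m=1$ is trivial, since then the only weight is $w_0=\mathbbm 1$ and there is nothing to check. For the inductive step, suppose $DS(\bUU_m,\bDelta,\bR)$ is an ECP-space. First I would show that $w_1$, as defined by \eqref{eq:wi} with $j=1$, namely $w_1=\sum_{\ell=2}^{m}Df_{\ell,m}$, is positive on all of $[a,b]$: each $Df_{\ell,m}$ is nonnegative on $[a,b]$ (indeed positive on $(a,b)$) by Proposition~\ref{prop:fmc}, hence so is their sum, and at the endpoints the zero conditions \eqref{eq:cond_f} are complementary enough that the sum does not vanish there either (at $a$ only $Df_{2,m}$ need not vanish, at $b$ only $Df_{m,m}$). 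So $w_1>0$ on $[a,b]$.

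Next I would show that $w_1 = Du$ where $u = \sum_{\ell=2}^{m} f_{\ell,m}$ has strictly increasing coefficients in the Bernstein basis $\{B_{\ell,m}\}$ of $S(\bUU_m,\bDelta,\bR)$ — this is precisely the computation carried out in the proof of Lemma~\ref{lem:wcoeff}. Then by Lemma~\ref{lem:wcoeff} (applied to $w=w_1$, which has positive coefficients $\alpha_i=1$ in the Bernstein-like basis $\{Df_{i+1,m}\}$ of $DS(\bUU_m,\bDelta,\bR)$), the restriction of $w_1$ to any $[c,d]\subseteq[a,b]$ again has positive coefficients in a Bernstein-like basis of the restriction of $DS(\bUU_m,\bDelta,\bR)$ to $[c,d]$; in particular $w_1>0$ everywhere. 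Having $w_1>0$ piecewise-$C^{m-2}$, I can form $L_1$ and the space $L_1 S(\bUU_m,\bDelta,\bR)$; the functions $f_{\ell,m-1}$ defined by \eqref{eq:fimj} with $j=1$ are well defined, and I would verify (as already remarked in the text preceding Proposition~\ref{prop:wi_positive_ECP}) that they are exactly the transition functions of $L_1 S(\bUU_m,\bDelta,\bR)$ relative to $[a,b]$, and that they are linearly independent. The crucial point is that $DS(\bUU_m,\bDelta,\bR)=w_1\, L_1 S(\bUU_m,\bDelta,\bR)$, and since multiplication by the positive piecewise function $w_1$ preserves the ECP property, $L_1 S(\bUU_m,\bDelta,\bR)$ is an $(m-1)$-dimensional ECP-space; moreover $D\bigl(L_1 S(\bUU_m,\bDelta,\bR)\bigr)$ is an ECP-space as well, because it equals $\tfrac1{w_2}(\cdots)$ — more cleanly, one checks $L_1 S$ contains constants (the image of $f_{2,m}$-type splines) and applies Proposition~\ref{prop:ECP_Bernstein} together with the structure of $DS$. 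Once $D(L_1 S)$ is known to be an ECP-space, the induction hypothesis applies to $L_1 S(\bUU_m,\bDelta,\bR)$ (which is $(m-1)$-dimensional), giving that $\{\mathbbm 1, w_2,\dots,w_{m-1}\}$ — the weights produced by \eqref{eq:wi}--\eqref{eq:fimj} applied to $L_1 S$, which coincide by construction with the tail of the original sequence — form a system of piecewise weight functions associated with $L_1 S(\bUU_m,\bDelta,\bR)$, and $\{w_2,\dots,w_{m-1}\}$ is associated with $D(L_1 S)$.

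Finally I would assemble the conclusion: by Proposition~\ref{prop:ECP_def} (or directly from the definition of the generalized derivatives $L_j$ in \eqref{eq:der_gen}), a spline $F$ lies in $S(\bUU_m,\bDelta,\bR)$ iff $L_0 F=F/w_0=F$ has $L_1 F = DF/w_1 \in L_1 S$, so the weight system of $S$ is obtained by prepending $w_0=\mathbbm 1$ to the weight system $\{w_1\}\cup\{w_2,\dots,w_{m-1}\}$ of the chain; concatenating with the inductive conclusion gives that $\{\mathbbm 1,w_1,\dots,w_{m-1}\}$ is associated with $S(\bUU_m,\bDelta,\bR)$ and $\{w_1,\dots,w_{m-1}\}$ with $DS(\bUU_m,\bDelta,\bR)=w_1 L_1 S$. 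I would also remark that the $L_j F^{[i-1]}(x_i)=L_j F^{[i]}(x_i)$ continuity conditions required by Proposition~\ref{prop:ECP_def} hold for every $F\in S(\bUU_m,\bDelta,\bR)$, which is the observation flagged (as ``can be verified'') just before Proposition~\ref{prop:wi_positive_ECP} and which follows by induction in tandem with the main argument. The main obstacle I anticipate is the careful bookkeeping needed to show that the weight functions generated by \eqref{eq:wi}--\eqref{eq:fimj} for the reduced space $L_1 S$ really are the tail $\{w_2,\dots,w_{m-1}\}$ of the original sequence — i.e., that the recursion is genuinely self-similar under passing from $S$ to $L_1 S$ — and, coupled with that, verifying that the intermediate spaces $L_j S$ retain the hypothesis ``$D(L_j S)$ is an ECP-space'' needed to keep the induction running; handling the endpoint zero-count conditions \eqref{eq:cond_f} for the reduced transition functions $f_{\ell,m-1}$ requires invoking Proposition~\ref{prop:fmc} at each level rather than just the dimension count.
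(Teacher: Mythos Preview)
Your approach is essentially the paper's: use Proposition~\ref{prop:fmc} to identify $\{Df_{\ell,m}\}$ as a Bernstein-like basis of $DS$ (hence $w_1>0$), invoke Lemma~\ref{lem:wcoeff} to get positive coefficients of $w_1$ in a Bernstein-like basis on every $[c,d]$, conclude that $D(L_1S)$ is again an ECP-space, and iterate. The only place where your write-up is looser than the paper is the passage from ``$w_1$ has positive coefficients in a Bernstein-like basis of $DS|_{[c,d]}$'' to ``$D(L_1S)$ is an ECP-space'': your first justification (``because it equals $\tfrac{1}{w_2}(\cdots)$'') is circular, and your second (``applies Proposition~\ref{prop:ECP_Bernstein} together with the structure of $DS$'') omits the actual mechanism. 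The paper makes this explicit: dividing each element of the Bernstein-like basis of $DS|_{[c,d]}$ by $w_1$ yields a \emph{normalized} basis---i.e.\ a Bernstein basis---of $L_1S|_{[c,d]}$, and positivity of the coefficients of $w_1$ is precisely what guarantees this normalized basis is still Bernstein-like (positive on $(c,d)$, correct endpoint zeros). Having a Bernstein basis on every $[c,d]$ then feeds directly into Proposition~\ref{prop:ECP_Bernstein}. Once you insert that one sentence, your induction and the paper's iteration coincide; the ``self-similarity'' bookkeeping you flag as an obstacle is immediate from the definitions \eqref{eq:wi}--\eqref{eq:fimj}, since those formulae depend only on the transition functions of the current space.
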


\begin{proof}
The functions $w_j$, $j=0,\dots,m-1$, are by construction piecewise $C^{m-j-1}$ on $(I,\bDelta)$.
We shall then prove that they are positive.
Let $f_{\ell,m}$, $\ell=1,\dots,m$ be the transition functions of $S(\bUU_m,\bDelta,\bR)$ relative to $[a,b]$.
From Proposition \ref{prop:fmc}, $Df_{\ell,m}$, $\ell=2,\dots,m$, is a Bernstein-like basis for
$DS(\bUU_m,\bDelta,\bR)$ and therefore $w_1$ in \eqref{eq:wi} is positive on $I$ and
the sequence $\{f_{\ell,m-1}, \ell=1,\dots,m-1\}$ defined by \eqref{eq:fimj} is a Bernstein basis for $L_1S(\bUU_m,\bDelta,\bR)$.

Being $DS(\bUU_m,\bDelta,\bR)$ an ECP-space on $[a,b]$, there exists a Bernstein-like basis in the restriction of $DS(\bUU_m,\bDelta,\bR)$ to any $[c,d]\subset [a,b]$ and, by Lemma \ref{lem:wcoeff},
$w_1$ can be represented with positive coefficients in such basis.
Hence, if we normalize this Bernstein-like basis by $w_1$, we get a Bernstein basis in the restriction of $L_1S(\bUU_m,\bDelta,\bR)$ to $[c,d]$. This shows that there exists a Bernstein basis in the restriction of $L_1S(\bUU_m,\bDelta,\bR)$ to any $[c,d]\subseteq [a,b]$ and thus,
by Proposition \ref{prop:ECP_Bernstein}, $DL_1S(\bUU_m,\bDelta,\bR)$ is an ECP-space on $[a,b]$. As a consequence (see Proposition \ref{prop:fmc}) the functions $f_{\ell,m-1}$, $\ell=2,\dots,m-1$, are monotonically increasing and hence $w_2>0$ on $I$. The positivity of $w_2$ allows us to repeat the above reasoning to conclude that $L_2S(\bUU_m,\bDelta,\bR)$ is an ECP-space.

By applying the same argument iteratively, it can be proven that all the spaces $DL_jS(\bUU_m,\bDelta,\bR)$ and $L_jS(\bUU_m,\bDelta,\bR)$ generated by the procedure \eqref{eq:wi}--\eqref{eq:fimj} are ECP-spaces and that the functions $w_j$ are positive for all $j=1,\dots,m-1$.
\end{proof}

\section{A numerical procedure to determine if a spline space with knots of zero multiplicity is suitable for design}
\label{sec:test}
Let $S(\bUU_m,\bDelta,\bR)$ be a given spline space where the underlying local EC-spaces $\UU_{i,m}$ contain constants and where $D\UU_{i,m}$ is an EC-space on $I_i$, for all $i=0,\dots,k$.
Our objective is to exploit the results presented in the previous section to develop a numerical procedure for determining whether the considered space is suitable for design.

The first step of the procedure consists in computing the transition functions relative to $[a,b]$. At this stage, if any of the systems \eqref{eq:linear_system} does not have a unique solution, then $S(\bUU_m,\bDelta,\bR)$ is not an ECP-space and therefore it cannot be suitable for design.

Successively, we need to verify that the transition functions are linearly independent.
When this is not the case, we can directly conclude that $DS(\bUU_m,\bDelta,\bR)$ is not an ECP-space (see Proposition \ref{prop:fmc}), and it is unnecessary to carry out the next steps.
The linear independence of the transition functions can be easily assessed by checking that
each of them vanishes at $a$ (or equivalently at $b$) \emph{exactly} as many times as required by definition.
Therefore, when the transition functions are linearly independent, we must have
$D^{(\ell-1)}f_{\ell,m}(a)\neq 0$, and $D^{(m-\ell+1)}(1-f_{\ell,m})(b)\neq 0$, for all $\ell=2,\dots,m$.
Conversely, if there is an integer $r\in \{2,\dots,m\}$, such that $f_{r,m}$ vanishes more than $r-1$ times at $a$ (or $1-f_{r,m}$ vanishes more than $m-r+1$ times at $b$),
then the transition functions are linearly dependent.

Hence, we shall proceed under the assumption that the transition functions $f_{\ell,m}$, $\ell=1,\dots,m$ exist and are linearly independent.
In the remaining part of the section we discuss how to determine whether or not the space $S(\bUU_m,\bDelta,\bR)$ is suitable for design in a computationally efficient way.

Let $L_j\UU_{i,m}$, $j=0,\dots,m-1$, be the space
obtained by generalized differentiation restricted to the interval $I_i$ and let $\{B_{\ell,m-j}^{[i]}, \ell=1,\dots,m-j\}$ be its Bernstein basis on the same interval.

For any $j=1,\dots,m-1$, we represent the $i$th piece of $f_{\ell,m-j+1}$ in the (local) Bernstein basis of $L_{j-1}\UU_{i,m}$ on $I_i$, $i=0,\dots,k$ as
\begin{equation}\label{eq:fiBernst}
f^{[i]}_{\ell,m-j+1}=\sum_{h=1}^m b^{[i]}_{h,\ell,m-j+1}B^{[i]}_{h,m-j+1}, \quad \ell=1,\dots,m-j+1.
\end{equation}

Our working assumptions (namely that $\UU_{i,m}$ contains constants and that $\UU_{i,m}$ and $D\UU_{i,m}$ are EC-spaces) guarantee the existence of the Bernstein basis for $\UU_{i,m}=L_0\UU_{i,m}$ on $I_i$.
Such basis is given by $\{B^{[i]}_{\ell,m}=g^{[i]}_{\ell,m}-g^{[i]}_{\ell+1,m}, \; \ell=1,\dots,m-1,\; B^{[i]}_{m,m}=g^{[i]}_{m,m}\}$, where $g^{[i]}_{\ell,m}$, $\ell=1,\dots,m$, are the transition functions for the considered space relative to $I_i$. The latter can be computed by imposing the conditions \eqref{eq:cond_f} at $x_i$ and $x_{i+1}$.
For any other space $L_{j-1}\UU_{i,m}$, $j=2,\dots,m-1$ the Bernstein basis on $I_i$, whenever it exists, can be computed recursively, as we will see in the following.

Now, for any $j=1,\dots,m-1$, set
\begin{equation}
\tilde{f}_{\ell,m-j}\coloneqq \sum_{r=\ell+1}^{m-j+1} Df_{r,m-j+1}, \qquad \ell=1,\dots,m-j,
\end{equation}
in such a way that the function $w_j$ defined in \eqref{eq:wi} is precisely equal to $\tilde{f}_{1,m-j}$.
In $I_i$, $i=0,\dots,k$, we can
substitute \eqref{eq:fiBernst} into the above equation, obtaining
\begin{equation}
\tilde{f}^{[i]}_{\ell,m-j}
= \sum_{r=\ell+1}^{m-j+1} \sum_{h=1}^{m-j+1} b^{[i]}_{h,r,m-j+1}D B^{[i]}_{h,m-j+1}
= \sum_{r=\ell+1}^{m-j+1} \sum_{h=1}^{m-j+1} b^{[i]}_{h,r,m-j+1}\left(D g^{[i]}_{h,m-j+1}-Dg^{[i]}_{h+1,m-j+1}\right),
\end{equation}
where $g^{[i]}_{\ell,m-j+1}$, $\ell=1,\dots,m-j+1$, are the transition functions of $L_{j-1}\UU_{i,m}$ relative to $I_i$.
Recalling that $g^{[i]}_{1,m-j+1}\equiv 1$ and $g^{[i]}_{m-j+2,m-j+1}\equiv 0$ for all $j$, we get
\begin{align}
\tilde{f}^{[i]}_{\ell,m-j}
&= \sum_{h=1}^{m-j}  \sum_{r=\ell+1}^{m-j+1} \left(b^{[i]}_{h+1,r,m-j+1}-b^{[i]}_{h,r,m-j+1}\right) Dg^{[i]}_{h+1,m-j+1}\\[1ex]
&= \sum_{h=1}^{m-j} \tilde{b}^{[i]}_{h,\ell,m-j} D g^{[i]}_{h+1,m-j+1},\label{eq:btilde}
\end{align}
where we have set
\begin{equation}\label{eq:Bernst_like_new}
\tilde{b}^{[i]}_{h,\ell,m-j}\coloneqq\sum_{r=\ell+1}^{m-j+1} \left(b^{[i]}_{h+1,r,m-j+1}-b^{[i]}_{h,r,m-j+1}\right), \qquad h = 1,\dots,m-j.
\end{equation}

\noindent
From \eqref{eq:fimj} and \eqref{eq:btilde} we get
\begin{equation}\label{eq:fmmjBernst}
f^{[i]}_{\ell,m-j}=\frac{\tilde{f}^{[i]}_{\ell,m-j}}{w_j}=
\frac{\sum_{h=1}^{m-j} \tilde{b}^{[i]}_{h,\ell,m-j} D g^{[i]}_{h+1,m-j+1}}
{\sum_{h=1}^{m-j} \tilde{b}^{[i]}_{h,1,m-j} D g^{[i]}_{h+1,m-j+1}}=
\sum_{h=1}^{m-j} \frac{\tilde{b}^{[i]}_{h,\ell,m-j}}{\tilde{b}^{[i]}_{h,1,m-j}} B^{[i]}_{h,m-j}=
\sum_{h=1}^{m-j} b^{[i]}_{h,\ell,m-j} B^{[i]}_{h,m-j},
\end{equation}
where
\begin{equation}\label{eq:Bernst_new}
B^{[i]}_{h,m-j}\coloneqq
\frac{\tilde{b}^{[i]}_{h,1,m-j} D g^{[i]}_{h+1,m-j+1}}
{\sum_{r=1}^{m-j} \tilde{b}^{[i]}_{r,1,m-j} D g^{[i]}_{r+1,m-j+1}}, \qquad h=1,\dots,m-j,
\end{equation}
\noindent and
\begin{equation}\label{eq:coeffsBernst_new}
b^{[i]}_{h,\ell,m-j}\coloneqq \frac{\tilde{b}^{[i]}_{h,\ell,m-j}}{\tilde{b}^{[i]}_{h,1,m-j}}.
\end{equation}

\noindent
Note that the denominator in \eqref{eq:Bernst_new} is precisely the function $w_j$ restricted to $I_i$.

To verify if the space $S(\bUU_m,\bDelta,\bR)$ is suitable for design, we can proceed iteratively.
For each step $j=1,\dots,m-1$, we test if
the differences $b^{[i]}_{h+1,r,m-j+1}-b^{[i]}_{h,r,m-j+1}$ in equation \eqref{eq:Bernst_like_new} are nonnegative for all $h=1,\dots,m-j$, $r=2,\dots,m-j+1$.
If the test is successful, all the transition functions $f_{\ell,m-j+1}$, $\ell=1,\dots,m-j+1$ have non-decreasing coefficients in the Bernstein bases of $L_{j-1}\UU_{i,m}$ on $I_i$, for $i=0,\dots,k$.
As a consequence, the function $w_{j}=\sum_{\ell=2}^{m-j+1} Df_{\ell,m-j+1}$ is positive on the whole of $I$ and we can proceed to the successive step.
In addition, the success of the test at step $j$ implies that all the coefficients $\tilde{b}_{h,\ell,m-j}^{[i]}$ on the left-hand side of equation \eqref{eq:Bernst_like_new} are positive and therefore
equation \eqref{eq:Bernst_new} yields the Bernstein basis of $L_{j}\UU_{i,m}$ on $I_i$.

Conversely, if at any step $j$ the test fails, this means that there is at least one $\ell=2,\dots,m-j+1$ and one interval $I_i$ such that the coefficients of $f_{\ell,m-j+1}^{[i]}$ in the Bernstein basis of $L_{j-1}\UU_{i,m}$ on $I_i$ do not form a non-decreasing sequence.
In this case, we can immediately conclude that the space $S(\bUU_m,\bDelta,\bR)$ is not suitable for design and stop the testing procedure.

To explain the latter statement, we shall recall that, if $S(\bUU_m,\bDelta,\bR)$ is suitable for design, then so is each space $L_{j-1} S(\bUU_m,\bDelta,\bR)$, $j=2,\dots,m-1$ generated by an associated system of piecewise weight functions.
In any of these spaces, by Proposition \ref{prop:fmc}, the transition functions $f_{\ell,m-j+1}$, $\ell=1,\dots,m-j+1$ relative to $[a,b]$ are monotonically increasing and their coefficients in the related Bernstein basis $\{B_{\ell,m-j+1}=f_{\ell,m-j+1}-f_{\ell+1,m-j+1}, \, \ell=1,\dots,m-j-1+1,\, B_{m-j+1,m-j+1}=f_{m-j+1,m-j+1}\}$ are non-decreasing. Moreover, the latter property holds considering the Bernstein basis in the restriction of $L_{j-1} S(\bUU_m,\bDelta,\bR)$ to any $I_i$, $i=0,\dots,k$.
Therefore, when the Bernstein coefficients violate this property, nor $L_{j-1}S(\bUU_m,\bDelta,\bR)$ or $S(\bUU_m,\bDelta,\bR)$ can be suitable for design.
It is interesting to note that, in this way, even though $w_{j}$ could be positive, the numerical procedure anticipates the non-positivity of one of the successive functions $w_h$, $h>j$.

From the computational point of view, the transition functions can be efficiently computed in an iterative way, where at each step $j=1,\dots,m-1$ the functions $f_{\ell,m-j}$, $\ell=1,\dots,m-j$ for $L_j S(\bUU_m,\bDelta,\bR)$ are generated.
In particular, equations \eqref{eq:Bernst_like_new}, \eqref{eq:fmmjBernst} and \eqref{eq:coeffsBernst_new} show that, for any $j=1,\dots,m-1$, the coefficients of $f^{[i]}_{\ell,m-j}$ in the Bernstein basis of $L_j\UU_{i,m}$ on $I_i$ can be recursively computed from the coefficients of the transition functions $f^{[i]}_{\ell,m-j+1}$, $\ell=1,\dots,m-j+1$, in the Bernstein basis of the previous space $L_{j-1}\UU_{i,m}$ on the same interval.

The following MATLAB function takes as input a matrix \texttt{b} of dimension $m\times m\times(k+1)$, where \texttt{b(l,h,i)}$=b^{[i]}_{h,\ell,m}$ are the coefficients of $f_{\ell,m}^{[i]}$ in equation \eqref{eq:fiBernst}. It returns a variable \texttt{test}, which is equal to zero if at any step the test on the monotonicity of the Bernstein coefficients fails.
In this function the loops in the variables \texttt{l}, \texttt{h}, \texttt{i} iterate respectively over the transition functions, the Bernstein coefficients and the knot intervals.
{\small
\begin{Verbatim}[commandchars=\\\{\}]
function test=SfD_test(b)
[m,m,kp1]=size(b);
test=1;
j=0;
while (j<=m-2 & test)
  mj=m-j;
  for i=1:kp1
% difference of subsequent Bernstein coefficients
    for l=2:mj
        for h=1:mj-1
            b(l,h,i)=b(l,h+1,i)-b(l,h,i);
            if (b(l,h,i)<0) test=0;
                return
            end
        end
    end
% summation step according to formula (\ref{eq:Bernst_like_new})
    for l=mj-1:-1:2
        for h=1:mj-1
            b(l,h,i)=b(l,h,i)+b(l+1,h,i);
        end
    end
% division step according to formula (\ref{eq:coeffsBernst_new})
    for l=2:mj
        b2hi=b(2,h,i);
        for h=1:mj-1
            b(l-1,h,i)=b(l,h,i)/b2hi;
        end
    end
  end
  j=j+1;
end
\end{Verbatim}
}

\noindent
We conclude by presenting two application examples.

\begin{exmp}\label{ex:1}
Let us consider a spline space $S(\bUU_4,\bDelta,\bR)$ based on the sequence of section spaces $\bUU_4=\{\UU_{0,4},\UU_{1,4},\allowbreak \UU_{2,4},\UU_{3,4}\}$, with $\UU_{0,4}=\UU_{2,4}=\Span\{1,x,\cos x, \sin x\}$, $\UU_{1,4}=\UU_{3,4}=\Span\{1,x,\cosh x, \sinh x\}$, knot
partition $\{x_0,x_1,x_2,x_3,x_4\}=\{0,2,4,5,6\}$ and connection matrices

\begin{equation}\label{eq:Rex1}
R_1=R_3=
\begin{pmatrix}
1 & 0 & 0 & 0\\
0 & 2 & 0 & 0\\
0 & 0 & 2 & 0\\
0 & 0 & 1 & 4
\end{pmatrix}, \qquad R_2=\mathbbm{I}_4.
\end{equation}

The necessary condition to guarantee that each space $D\UU_{h,4}$, $h=0,2$ is an EC-space on $[x_h,x_{h+1}]$ is $x_{h+1}-x_h<2\pi$, which is fulfilled by the given knots, while $D\UU_{h,4}$, $h=1,3$ are EC-spaces on $\RR$.
It can be observed that the transition functions $f_{\ell,4-j}$, $\ell=2,\dots,4-j$ are monotonically increasing in all the spaces $L_jS(\bUU_4,\bDelta,\bR)$, $j=0,1,2$, (Figures \ref{fig:ex1_1}--\ref{fig:ex1_3}) and that, accordingly, the functions $w_j=\sum_{\ell=2}^{4-j}Df_{\ell,4-j}$, $j=1,2,3$ are positive (Figure \ref{fig:ex1_4}--\ref{fig:ex1_6}, where the functions $w_j$ are depicted in bold). As a consequence, the considered spline space is suitable for design.

If we now take the same sequence of section spaces $\bUU_4$ and the same connection matrices with knots $\{0,0.5,5.3,\allowbreak 10.1,14.9\}$, we get the situation illustrated in Figure \ref{fig:f1_2}. Also in this case the knot intervals fulfill the aforementioned  necessary condition $x_{h+1}-x_h<2\pi$, $h = 0,2$. It is interesting to observe that the transition functions $f_{\ell,4}$, $\ell=2,3,4$ are monotonically increasing (Figure \ref{fig:ex1_7}), which entails that $w_1$ is positive (Figure \ref{fig:ex1_8}) and that the set $\{B_{\ell,4}=f_{\ell,4}-f_{\ell+1,4}, \, \ell=1,\dots,3, \, B_{4,4}=f_{4,4}\}$ is a Bernstein basis in the sense of Definition \ref{def:Bernst_basis} (Figure \ref{fig:ex1_9}).
Nevertheless the considered space is not suitable for design.
In particular the Bernstein coefficients of $f^{[2]}_{2,4}$ do not form a non-decreasing sequence in $I_2$ and therefore, at this stage, our numerical test stops returning a negative response.
If we were to proceed further, we would find that $w_2$ is nonpositive (Figure \ref{fig:ex1_11}) and that the transition functions $f_{\ell,3}$, $\ell=2,3$, are non-monotonically increasing (Figure \ref{fig:ex1_10}).

\end{exmp}

\begin{exmp}\label{ex:2}
We consider some samples of parametric curves from spline spaces with knots of zero multiplicity, where the connection matrices can be used to obtain tension effects useful in geometric modeling.
All the curves illustrated in Figure \ref{fig:f2} are represented in the Bernstein basis of a spline space $S(\bUU_m,\bDelta,\bR)$, given by the sequence $\{B_{\ell,m}=f_{\ell,m}-f_{\ell+1,m}, \, \ell=1,\dots,m-1, \, B_{m,m}=f_{m,m}\}$.
In the three subfigures the knots are the same $\{x_0,x_1,x_2\}=\{0,1,2\}$, whereas the
local spaces $\bUU_m=\{\UU_{0,m},\UU_{1,m}\}$ have different dimension $m=4,5,6$.
In all the figures, $\UU_{0,m}=\UU_{1,m}$ and there is only one connection matrix at $x_1$, which we indicate by $R_1^{\text{(a)}}$, $R_1^{\text{(b)}}$ and $R_1^{\text{(c)}}$ for the three subfigures.
In particular

\begin{equation}\label{eq:R_ex2}
R_1^{\text{(a)}}=
\begin{pmatrix}
1 & 0 & 0 & 0\\
0 & 1 & 0 & 0\\
0 & 0 & 1 & 0\\
0 & 0 & \beta & 1
\end{pmatrix},
\qquad
R_1^{\text{(b)}}=
\begin{pmatrix}
1 & 0 & 0 & 0 & 0\\
0 & 1 & 0 & 0 &0 \\
0 & 0 & 1 & 0 & 0\\
0 & 0 & 0 & 1 & 0\\
0 & 0 & 0 & \beta & 1
\end{pmatrix},
\qquad
R_1^{\text{(c)}}=
\begin{pmatrix}
1 & 0 & 0 & 0 & 0 & 0\\
0 & 1 & 0 & 0 & 0 & 0 \\
0 & 0 & 1 & 0 & 0 & 0\\
0 & 0 & \beta & 1 & 0 & 0\\
0 & 0 & 0 & 0 & 1 & 0 \\
0 & 0 & 0 & 0 & 0 & 1
\end{pmatrix},
\end{equation}

\noindent
in such a way that the variable parameter $\beta$ in the above matrices influences the value of higher order derivatives.

In Figure \ref{fig:ex2_1} the local section spaces are $\UU_{0,4}=\UU_{1,4}=\Span\{1,x, x^2,x^3\}$ and
the displayed curves correspond to $\beta=-3.9, 0, 10, 100$.
In Figure \ref{fig:ex2_2}, $\UU_{0,5}=\UU_{1,5}=\Span\{1,x,x^2,\cos x,\sin x\}$ and
$\beta=-3.5, 0, 10, 100$ for each curve.
In Figure \ref{fig:ex2_3}, $\UU_{0,6}=\UU_{1,6}=\Span\{1,x,\cos x, \allowbreak \sin x,x \cos x, x \sin x\}$
and the different curves are obtained for $\beta=-6.5, -5, 0, 100$.

The necessary condition that all $D\UU_{i,m}$ be EC-spaces is fulfilled in all the considered examples. In particular, the latter condition holds when $x_{i+1}-x_i$ is smaller than $8.9868189$ for $\UU_{i,5}$ \cite{CMP2003} and when
$x_{i+1}-x_i$ is smaller than $2\pi$ for $\UU_{i,6}$.
According to the proposed numerical test, for all the considered values of $\beta$ the underlying spline spaces are suitable for design.

In these examples, one element of the connection matrix acts as a shape or tension parameter, namely, the higher its value, the closer the curve lies to the control polygon.
In such a situation, it is important to be able to progressively increase or decrease the parameter
while staying in the class of spaces suitable for design.
The numerical procedure presented in this section is well suited to this purpose, since it allows for testing in a computationally efficient way whether a specific value of $\beta$ gives rise to an admissible spline space. This means that the test can be performed while the user interactively modifies the parameter according to the shape to be modelled.

Moreover, our test allowed us to determine experimentally that there is a minimum value $\beta_{\min}$ beyond which the corresponding space is no longer suitable for design. Conversely, for any $\beta>\beta_{\min}$ we obtain a space which is suitable for design.
Figure \ref{fig:f2_2} illustrates the transition functions and the Bernstein basis for the spline space $\bUU_4$ with $\beta=-3.9$, $\bUU_5$ with $\beta=-3.5$ and $\bUU_6$ with $\beta=-6.5$. Despite these values still correspond to admissible spaces, they are close to $\beta_{\min}$. Accordingly, as it can be observed from the figures, the transition functions and the Bernstein basis functions are close to becoming linearly dependent.

\ifspringer
\begin{acknowledgements}
The authors would like to acknowledge support from the Italian GNCS-INdAM.
\end{acknowledgements}
\fi

\ifelsevier
\section*{Acknowledgements}
The authors would like to acknowledge support from the Italian GNCS-INdAM.
We also thank the anonymous reviewers for their valuable comments and suggestions.
\fi




\ifelsevier
\bibliographystyle{model1-num-names} 
\bibliography{AMC_SMART_paper} 
\fi



\ifspringer
\bibliographystyle{spmpsci} 
\bibliography{AMC_SMART_paper} 
\fi

\begin{figure}
\centering
\subfigure[$f_{\ell,4}$, $\ell=1,\dots,4$]
{\includegraphics[width=0.95\textwidth/3]{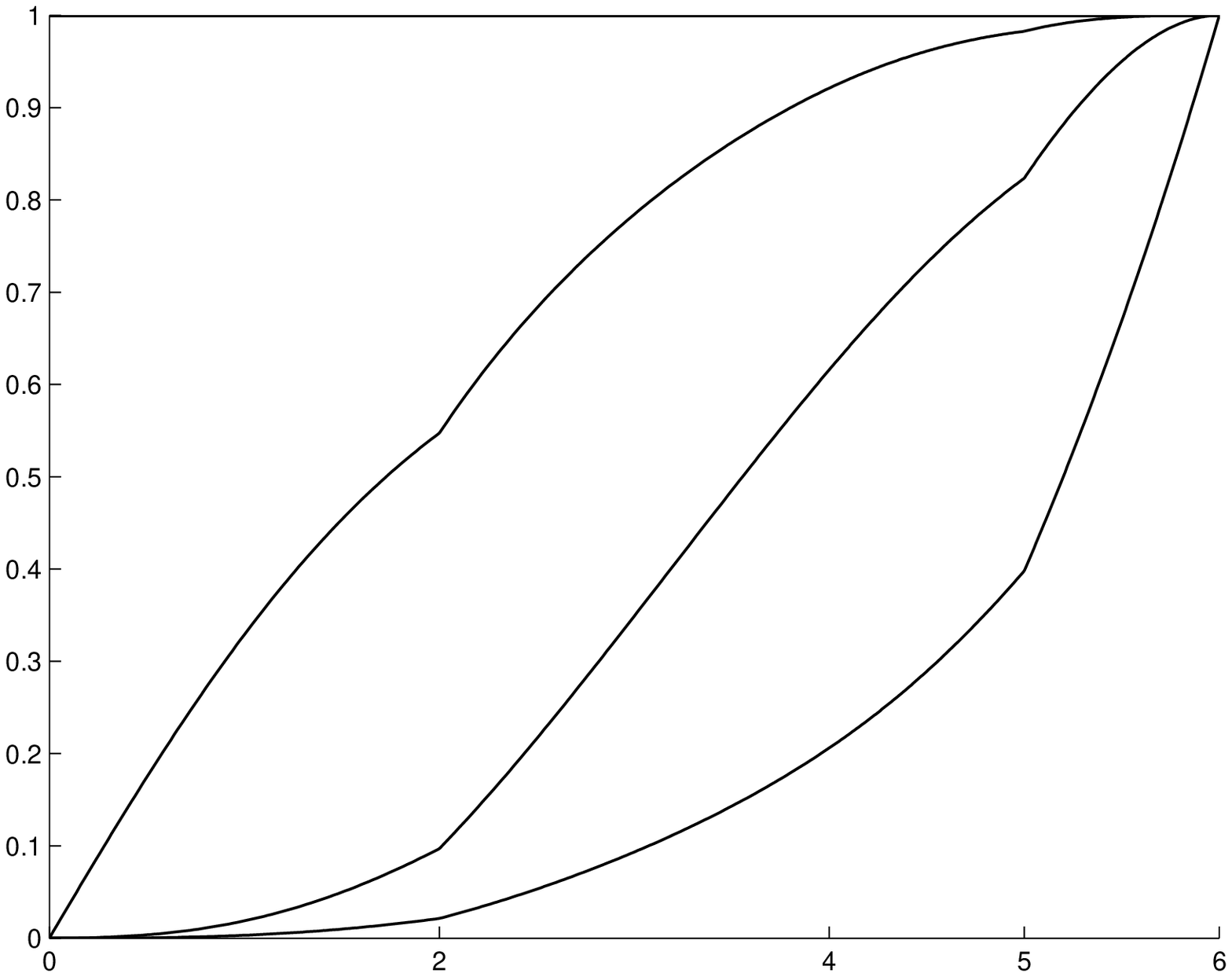}\label{fig:ex1_1}}
\hfill
\subfigure[$f_{\ell,3}$, $\ell=1,2,3$]
{\includegraphics[width=0.95\textwidth/3]{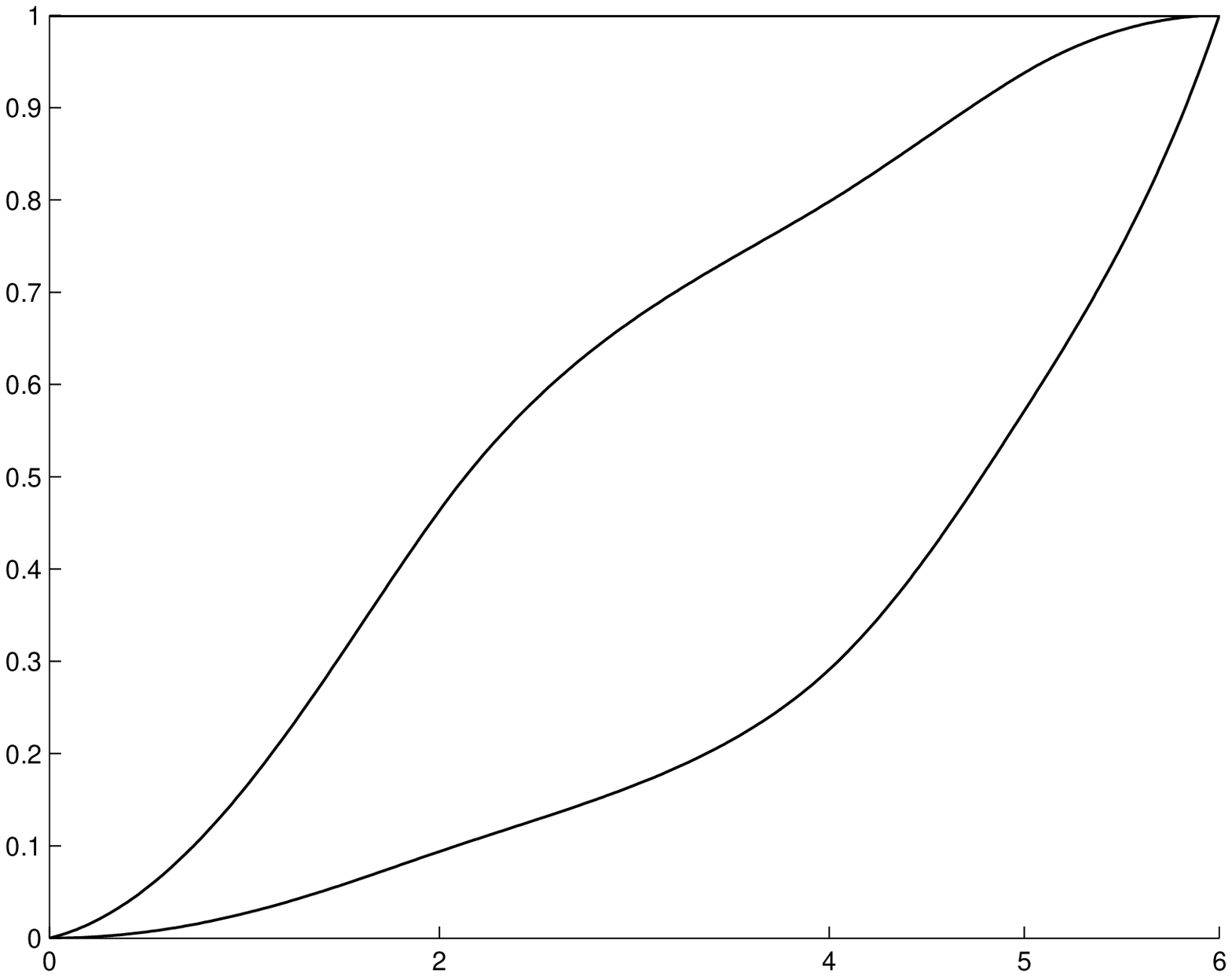}\label{fig:ex1_2}}
\hfill
\subfigure[$f_{\ell,2}$, $\ell=1,2$]
{\includegraphics[width=0.95\textwidth/3]{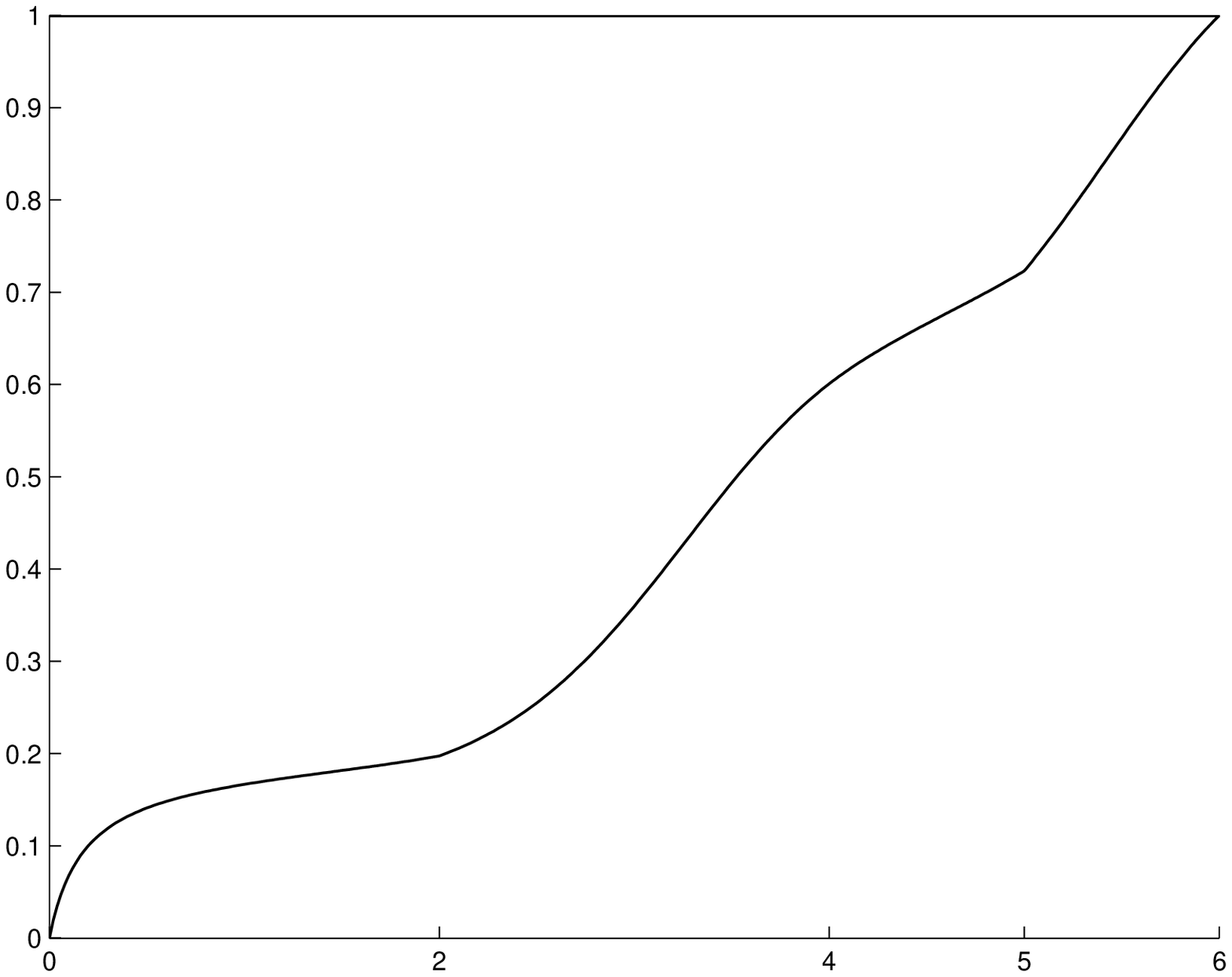}\label{fig:ex1_3}}\\
\subfigure[$\tilde{f}_{\ell,3}=\sum\nolimits_{h=\ell+1}^4 Df_{h,4}$, $\ell=1,2,3$, $\ w_1=\tilde{f}_{1,3}$]
{\includegraphics[width=0.95\textwidth/3]{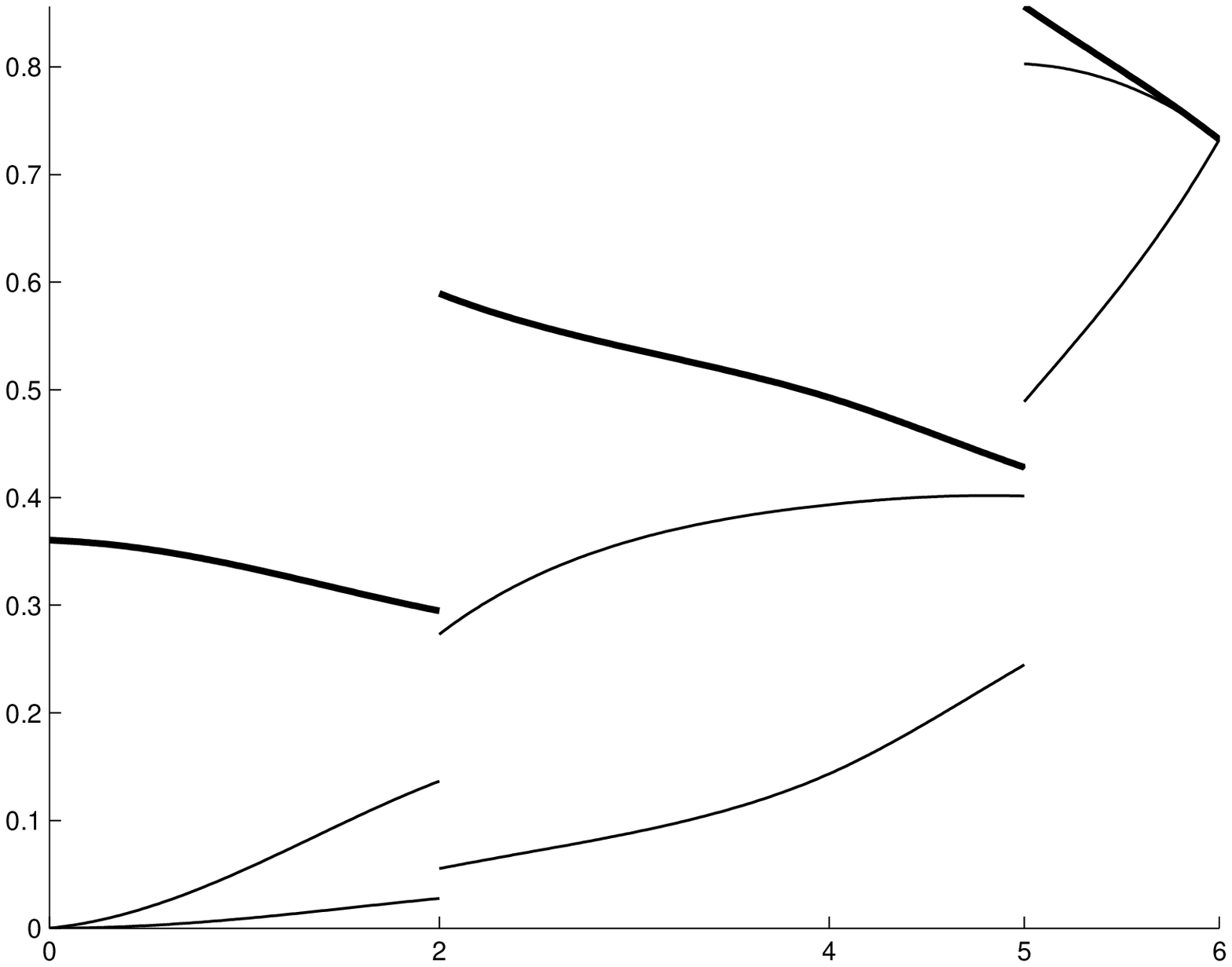}\label{fig:ex1_4}}
\hfill
\subfigure[$\tilde{f}_{\ell,2}=\sum\nolimits_{h=\ell+1}^3 Df_{h,3}$, $\ell=1,2$, $\ w_2=\tilde{f}_{1,2}$]
{\includegraphics[width=0.95\textwidth/3]{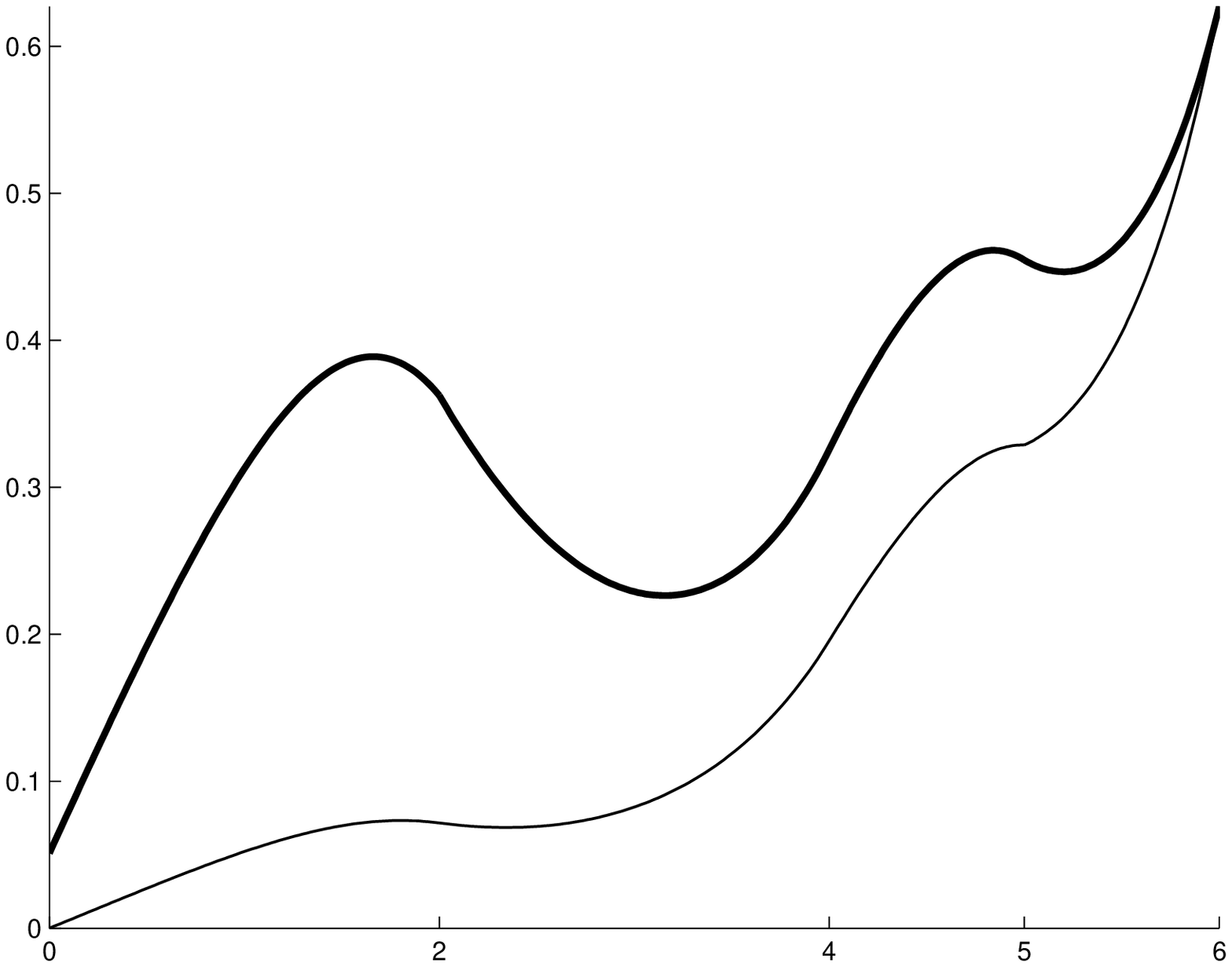}\label{fig:ex1_5}}
\hfill
\subfigure[$\tilde{f}_{1,1}=Df_{2,2}=w_3$]
{\includegraphics[width=0.95\textwidth/3]{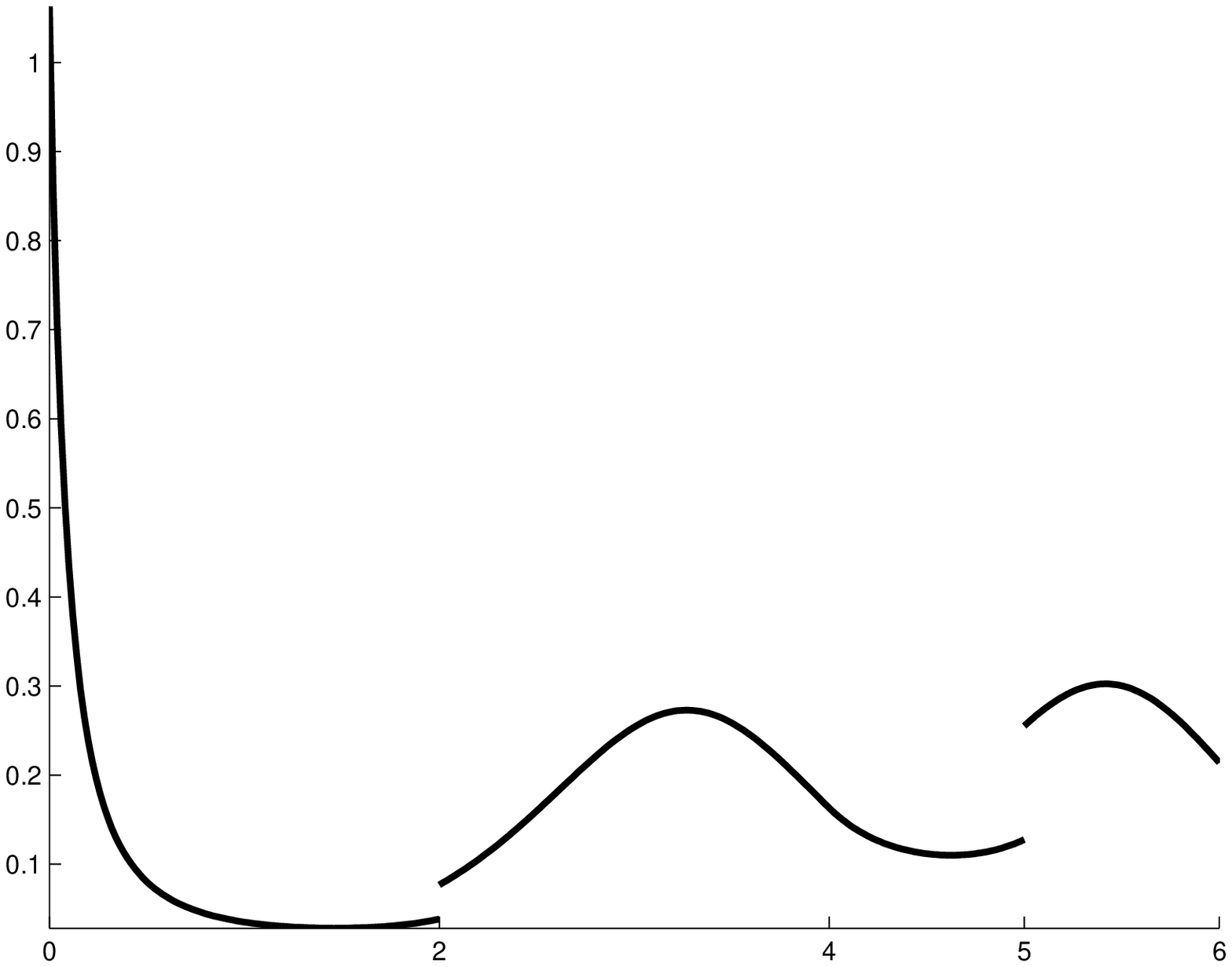}\label{fig:ex1_6}}
\caption{Functions generated from formulas \protect\eqref{eq:wi} and \protect\eqref{eq:fimj} for a spline space $S(\bUU_m,\bDelta,\bR)$, with knots $\{x_0,x_1,x_2,x_3,x_4\}=\{0,2,4,5,6\}$, connection matrices given by equation \protect\eqref{eq:Rex1} and where $\UU_{0,4}=\UU_{2,4}=\Span\{1,x,\cos x, \sin x\}$, $\UU_{1,4}=\UU_{3,4}=\Span\{1,x,\cosh x, \sinh x\}$. In Figures \ref{fig:ex1_4}-\ref{fig:ex1_6} the functions $w_j$ are depicted in bold.}
\label{fig:f1}
\end{figure}

\begin{figure}
\centering
\subfigure[$f_{\ell,4}$, $\ell=1,\dots,4$]
{\includegraphics[width=0.95\textwidth/3]{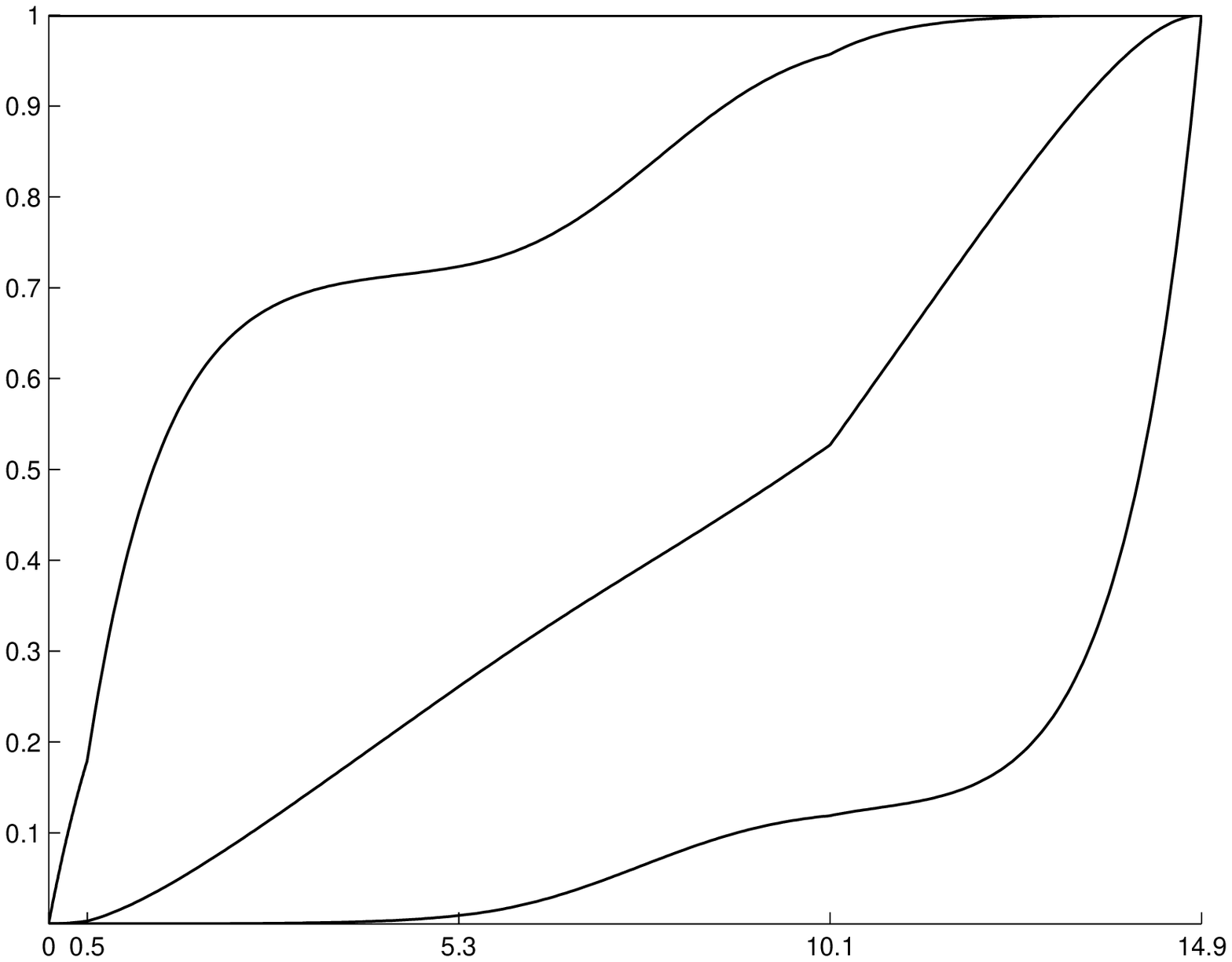}\label{fig:ex1_7}}
\hfill
\subfigure[$\tilde{f}_{\ell,3}=\sum\nolimits_{h=\ell+1}^4 Df_{h,4}$, $\ell=1,2,3$, $w_1=\tilde{f}_{1,3}$]
{\includegraphics[width=0.95\textwidth/3]{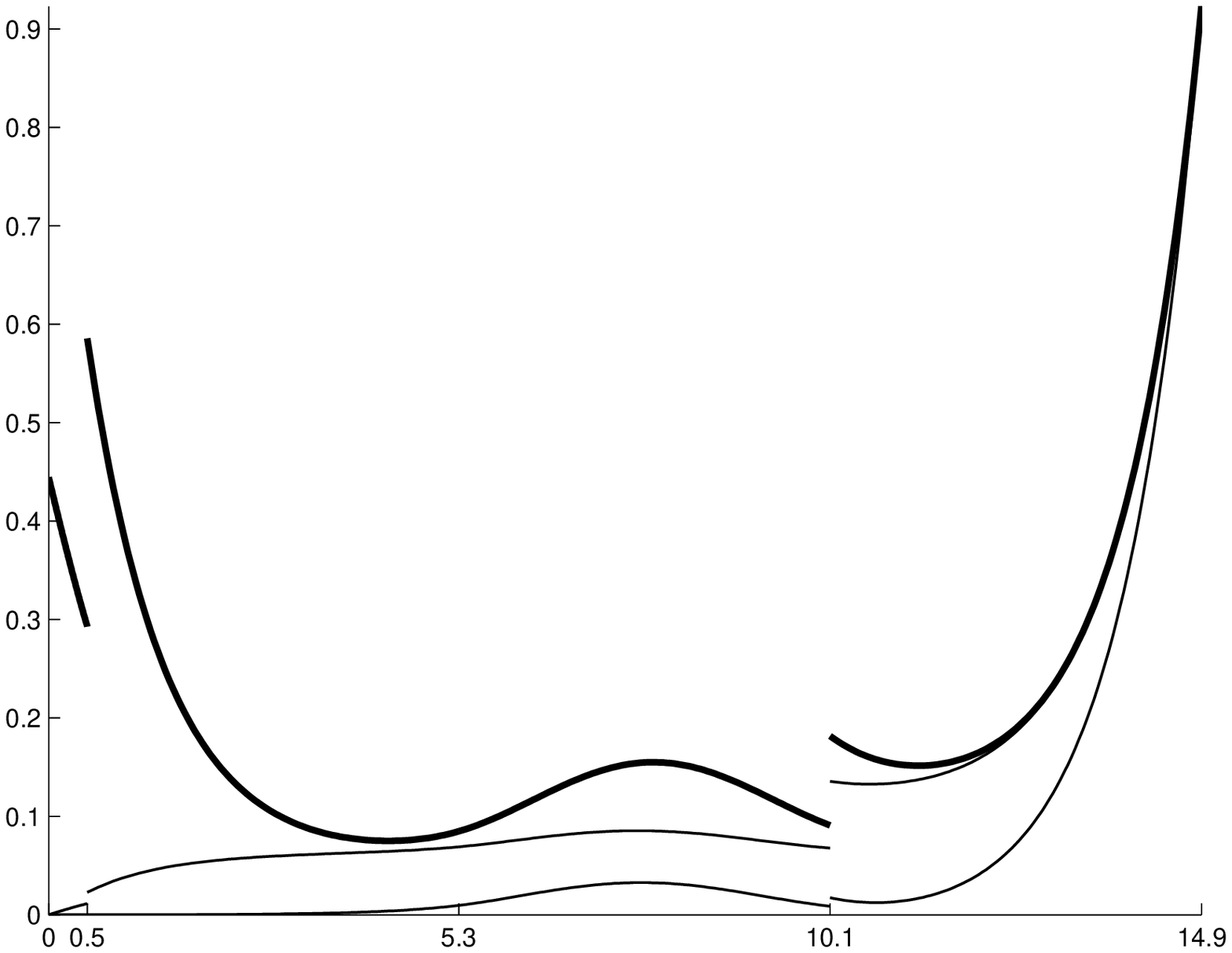}\label{fig:ex1_8}}
\hfill
\subfigure[$f_{\ell,2}$, $\ell=1,\dots,2$]
{\includegraphics[width=0.95\textwidth/3]{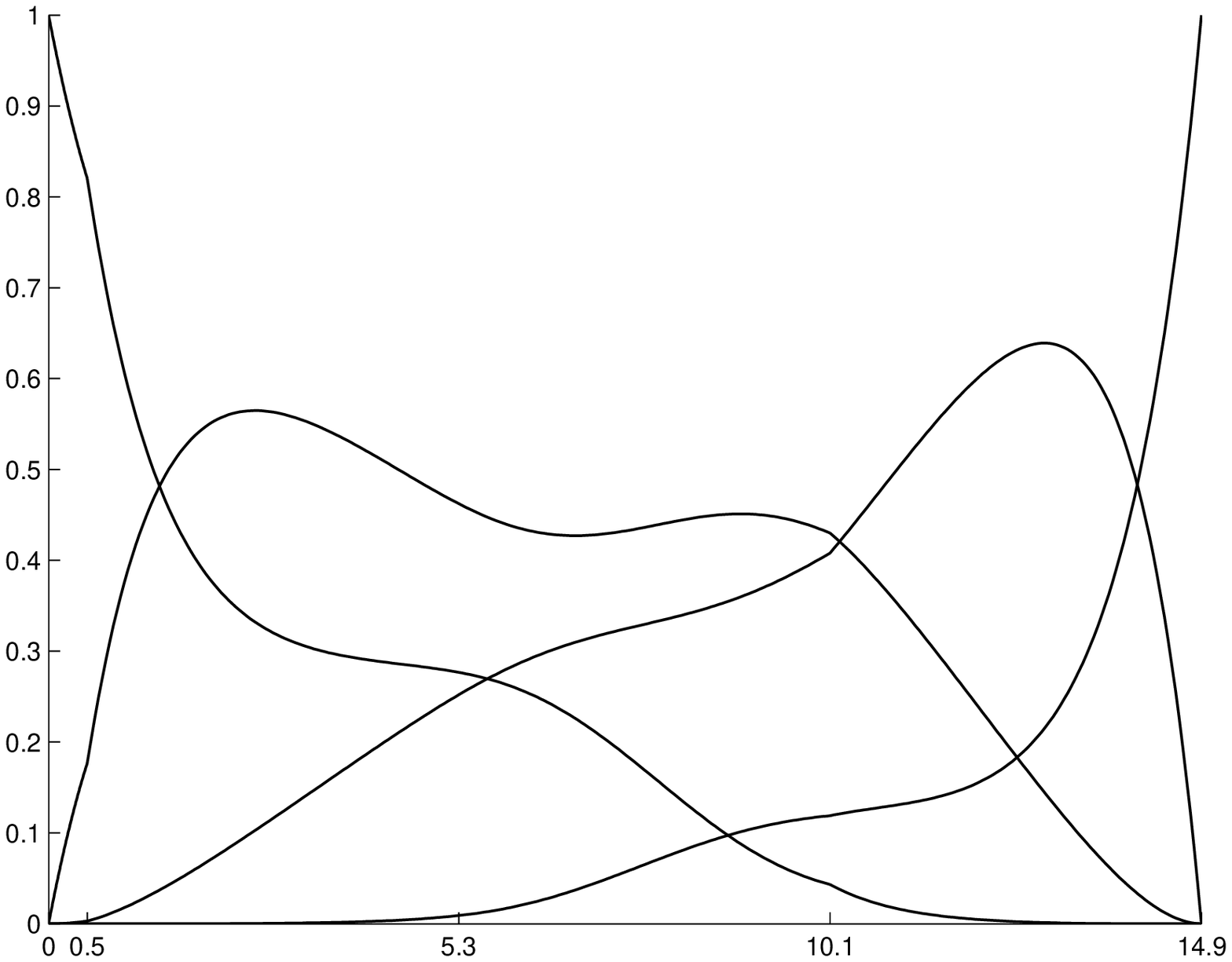}\label{fig:ex1_9}}\\
\subfigure[$f_{\ell,3}$, $\ell=1,\dots,3$]
{\includegraphics[width=0.95\textwidth/3]{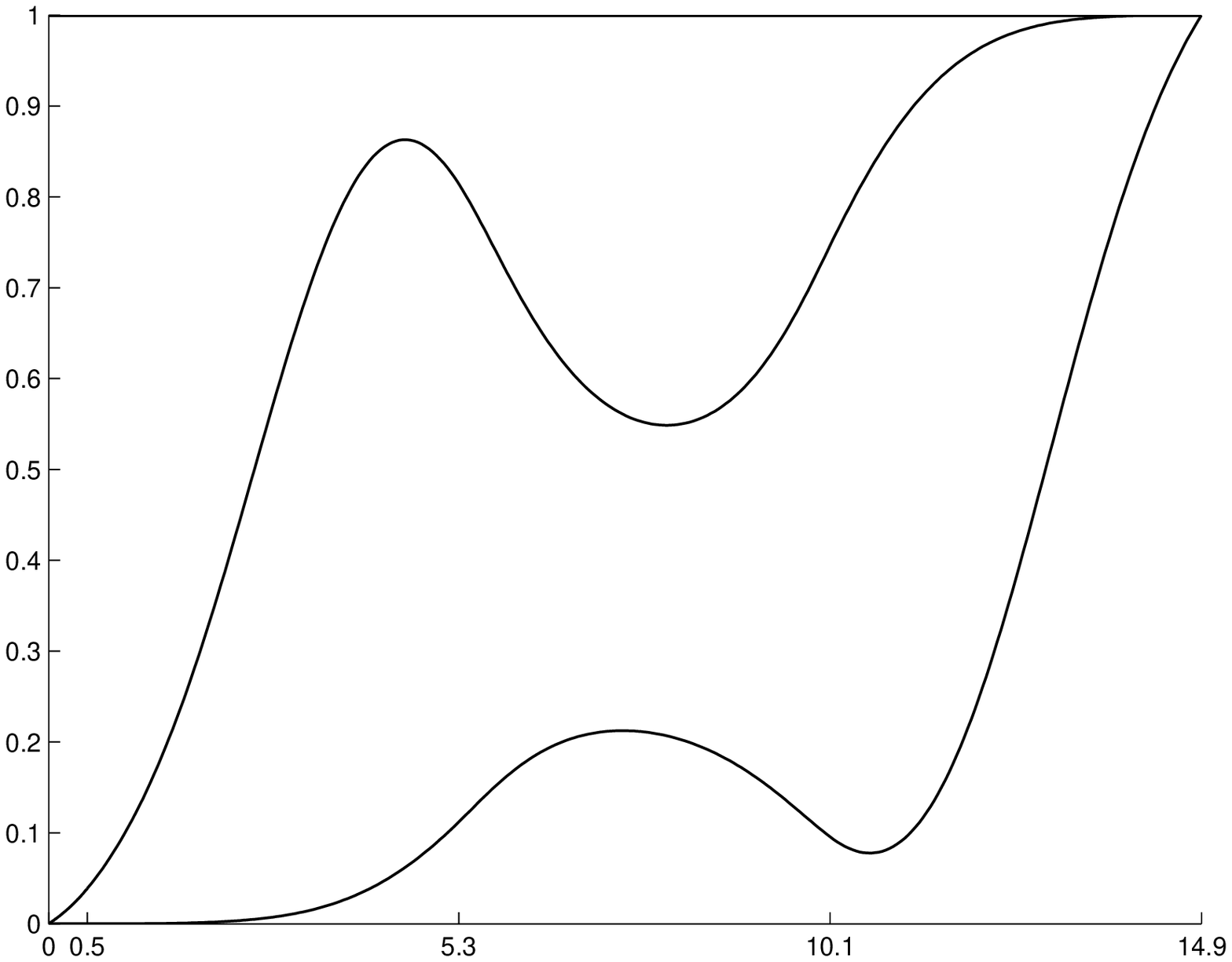}\label{fig:ex1_10}}
\hspace{3ex}
\subfigure[$\tilde{f}_{\ell,2}=\sum\nolimits_{h=\ell+1}^3 Df_{h,3}$, $\ell=1,2$, $w_2=\tilde{f}_{1,2}$]
{\includegraphics[width=0.95\textwidth/3]{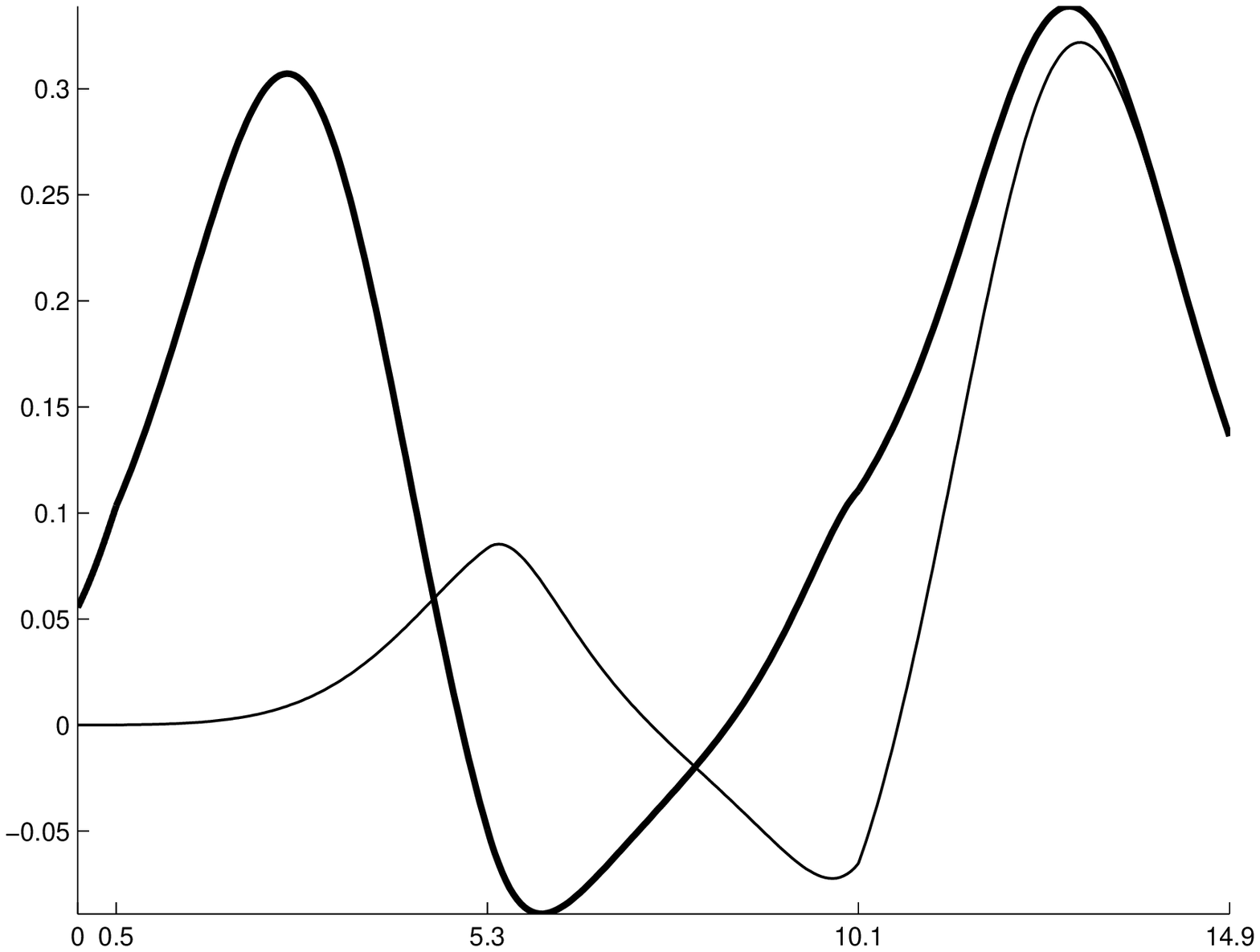}\label{fig:ex1_11}}
\caption{Functions generated from formulas \protect\eqref{eq:wi} and \protect\eqref{eq:fimj} for a spline space $S(\bUU_m,\bDelta,\bR)$, with knots $\{x_0,x_1,x_2,x_3,x_4\}=\{0,0.5,5.3,10.1,14.9\}$, connection matrices given by equation \protect\eqref{eq:Rex1} and where $\UU_{0,m}=\UU_{2,m}=\Span\{1,x,\cos x, \sin x\}$, $\UU_{1,m}=\UU_{3,m}=\Span\{1,x,\cosh x, \sinh x\}$. In Figures \ref{fig:ex1_8} and \ref{fig:ex1_11} the functions $w_j$ are depicted in bold.}
\label{fig:f1_2}
\end{figure}

\begin{figure}
\centering
\subfigure[]
{\includegraphics[width=0.95\textwidth/3]{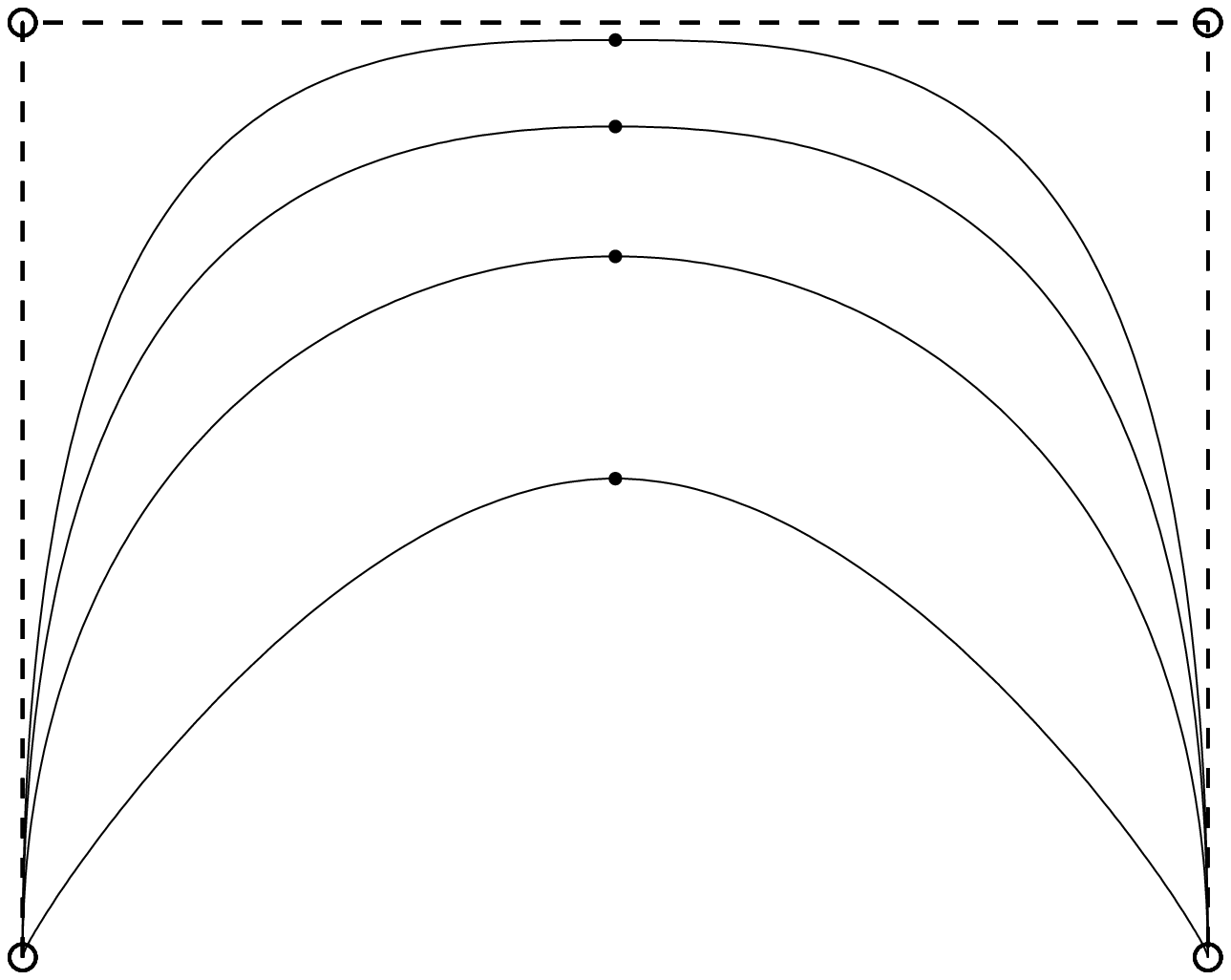}\label{fig:ex2_1}}
\hfill
\subfigure[]
{\includegraphics[width=0.95\textwidth/3]{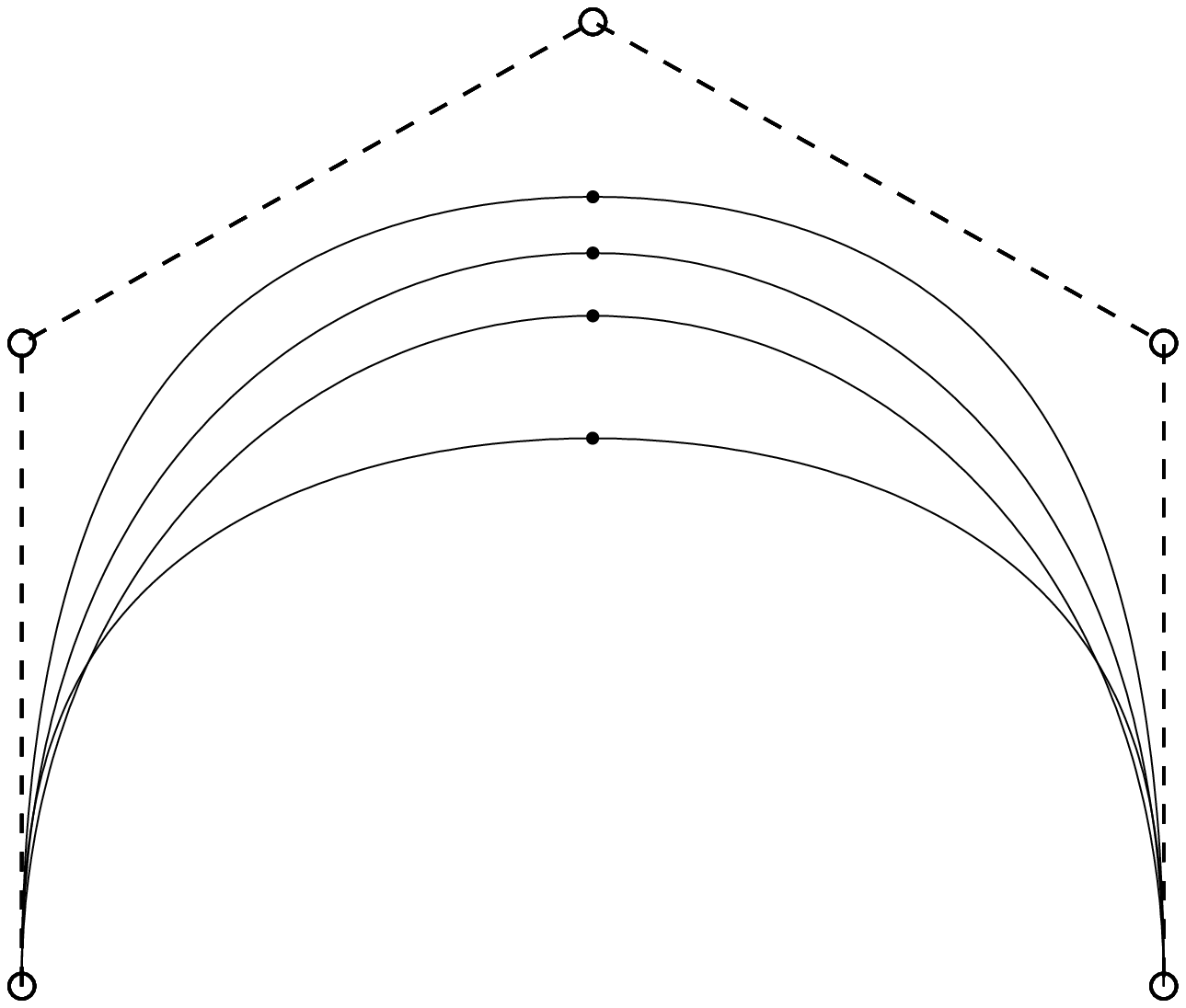}\label{fig:ex2_2}}
\hfill
\subfigure[]
{\includegraphics[width=0.95\textwidth/3]{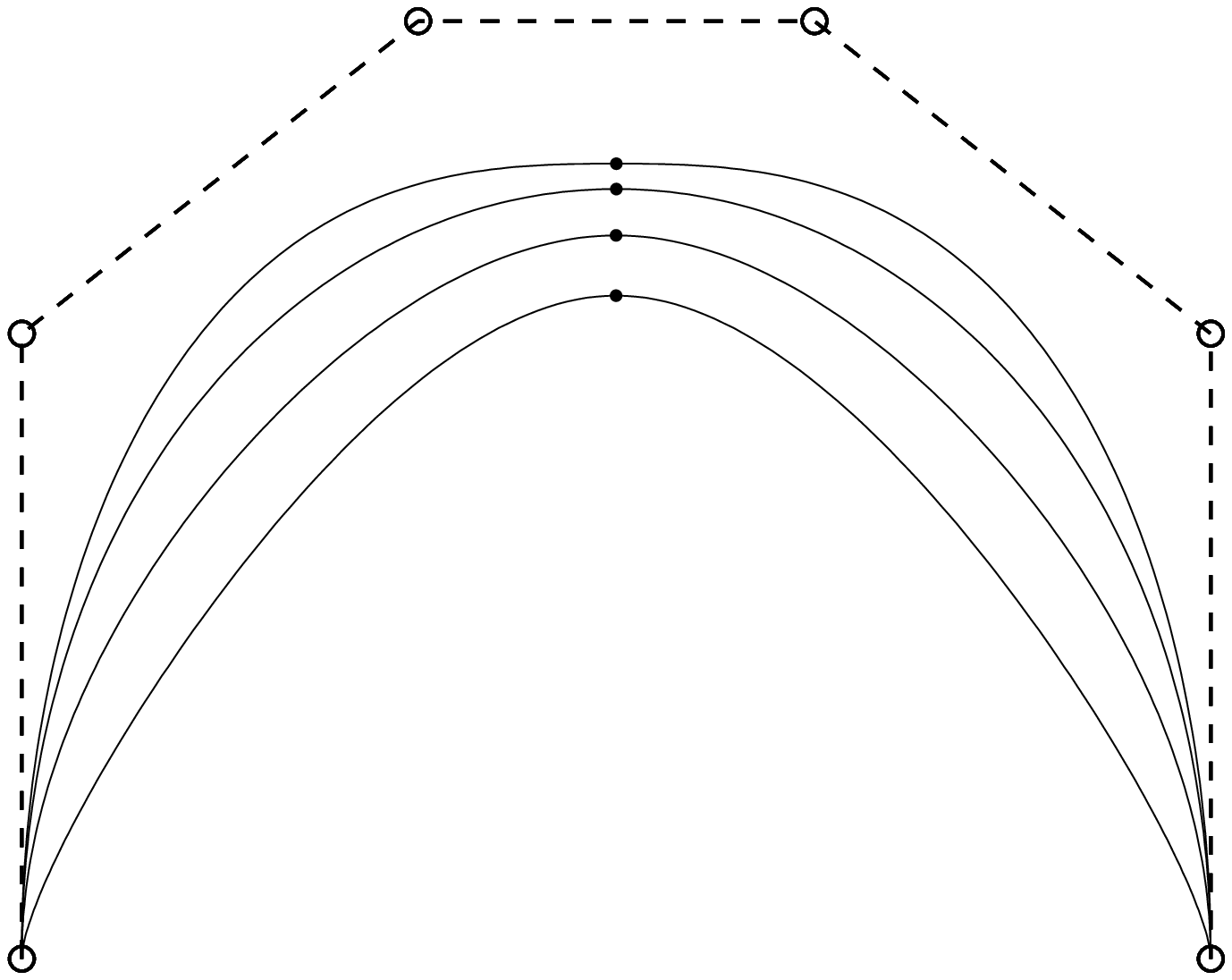}\label{fig:ex2_3}}
\caption{Parametric curves from spline spaces with knots of zero multiplicity. The underlying spline spaces are described in detail in Example \ref{ex:2}.}
\label{fig:f2}
\end{figure}
\end{exmp}

\begin{figure}
\centering
\subfigure[$f_{\ell,4}$, $\ell=1,\dots,4$]
{\includegraphics[width=0.95\textwidth/3]{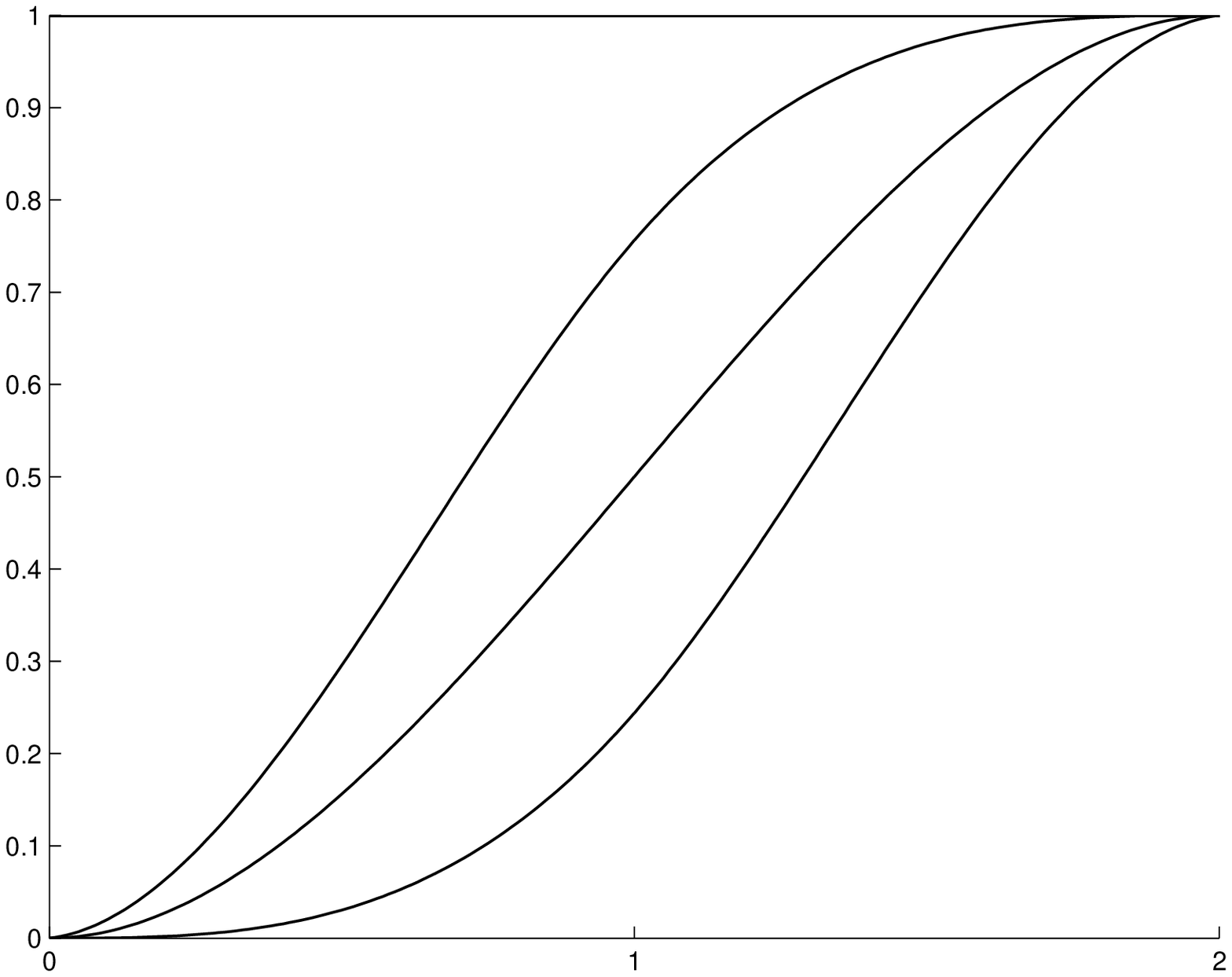}\label{fig:ex2_4}}
\hfill
\subfigure[$f_{\ell,5}$, $\ell=1,\dots,5$]
{\includegraphics[width=0.95\textwidth/3]{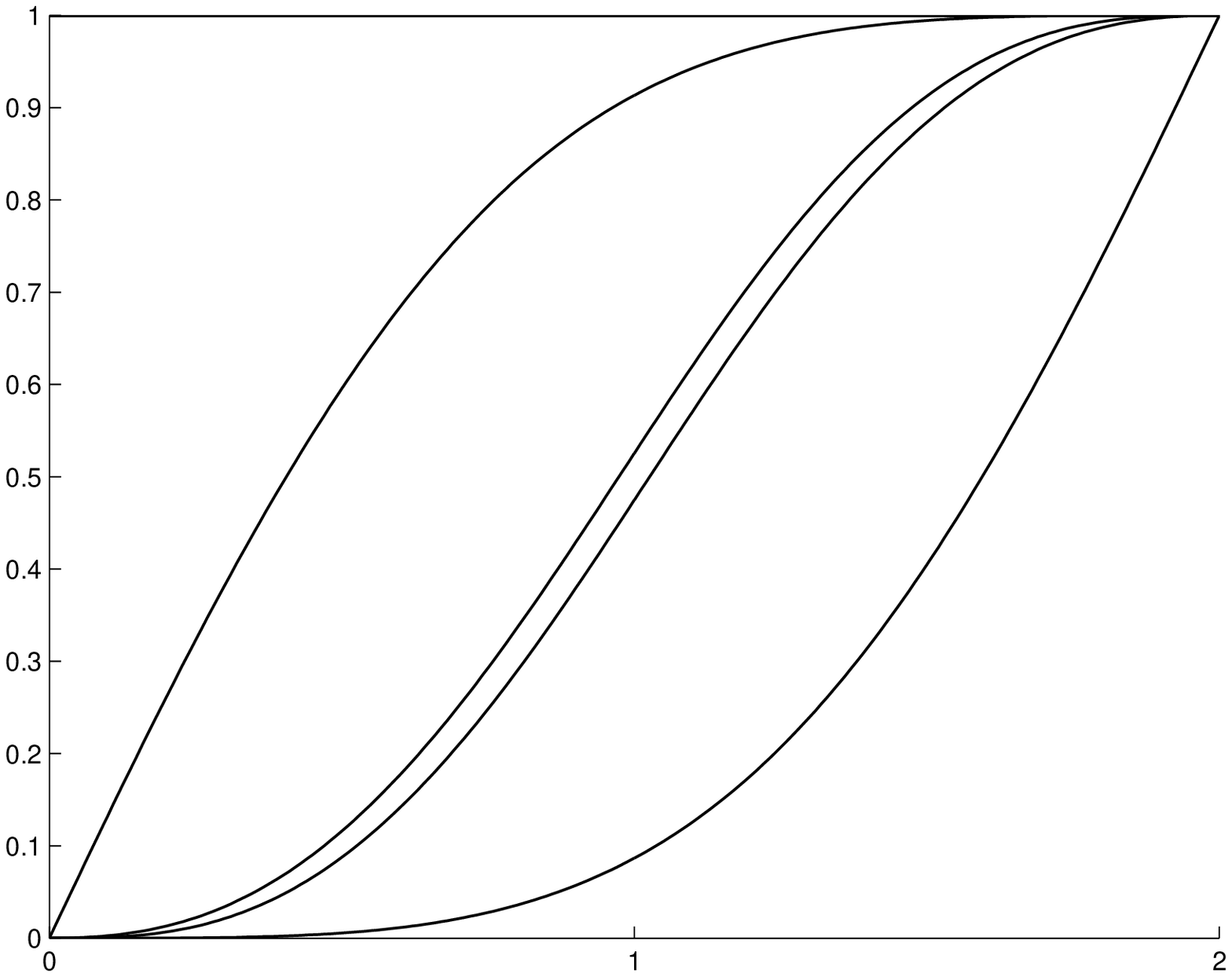}\label{fig:ex2_5}}
\hfill
\subfigure[$f_{\ell,6}$, $\ell=1,\dots,6$]
{\includegraphics[width=0.95\textwidth/3]{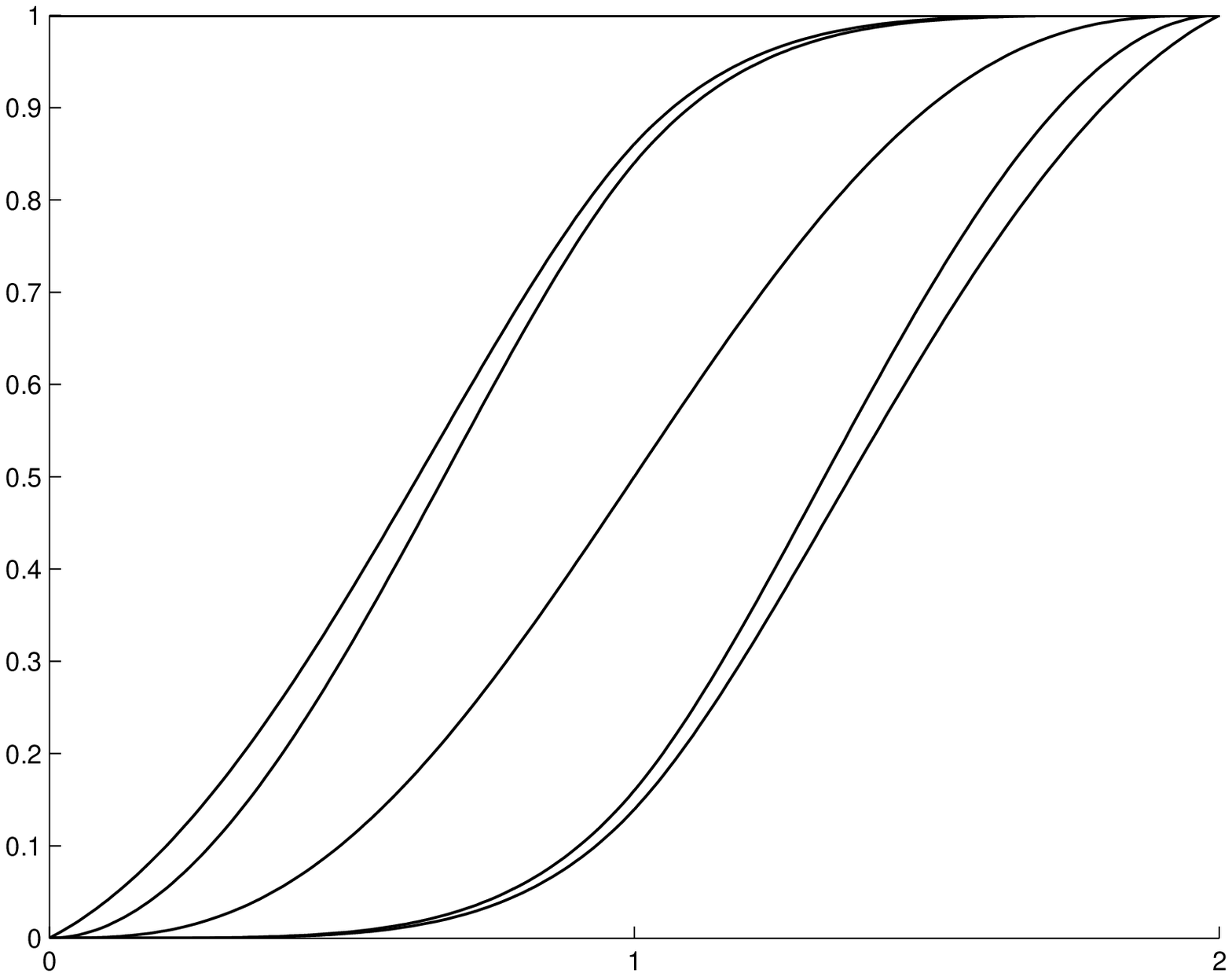}\label{fig:ex2_6}}\\
\subfigure[$B_{\ell,4}$, $\ell=1,\dots,4$]
{\includegraphics[width=0.95\textwidth/3]{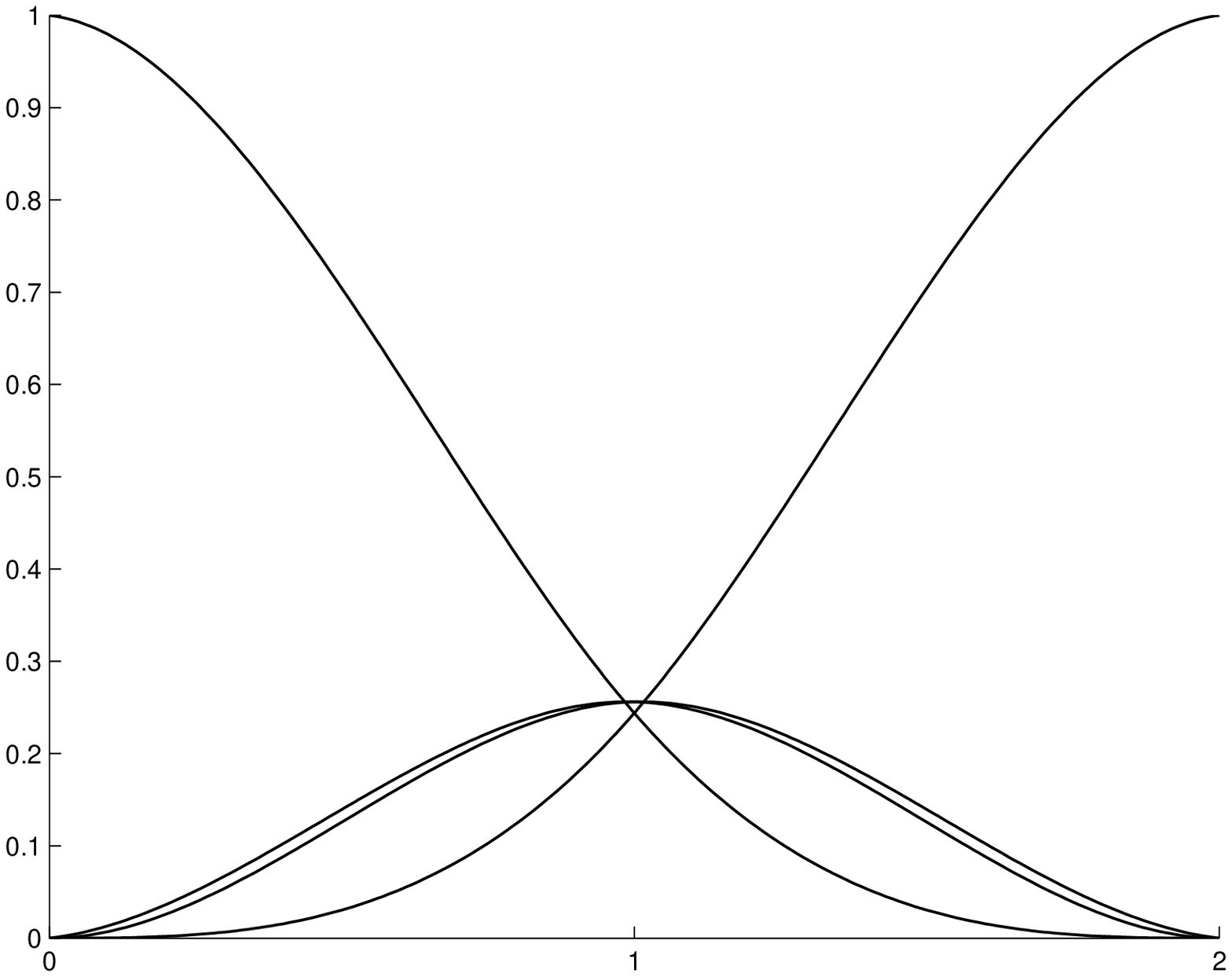}\label{fig:ex2_7}}
\hfill
\subfigure[$B_{\ell,5}$, $\ell=1,\dots,5$]
{\includegraphics[width=0.95\textwidth/3]{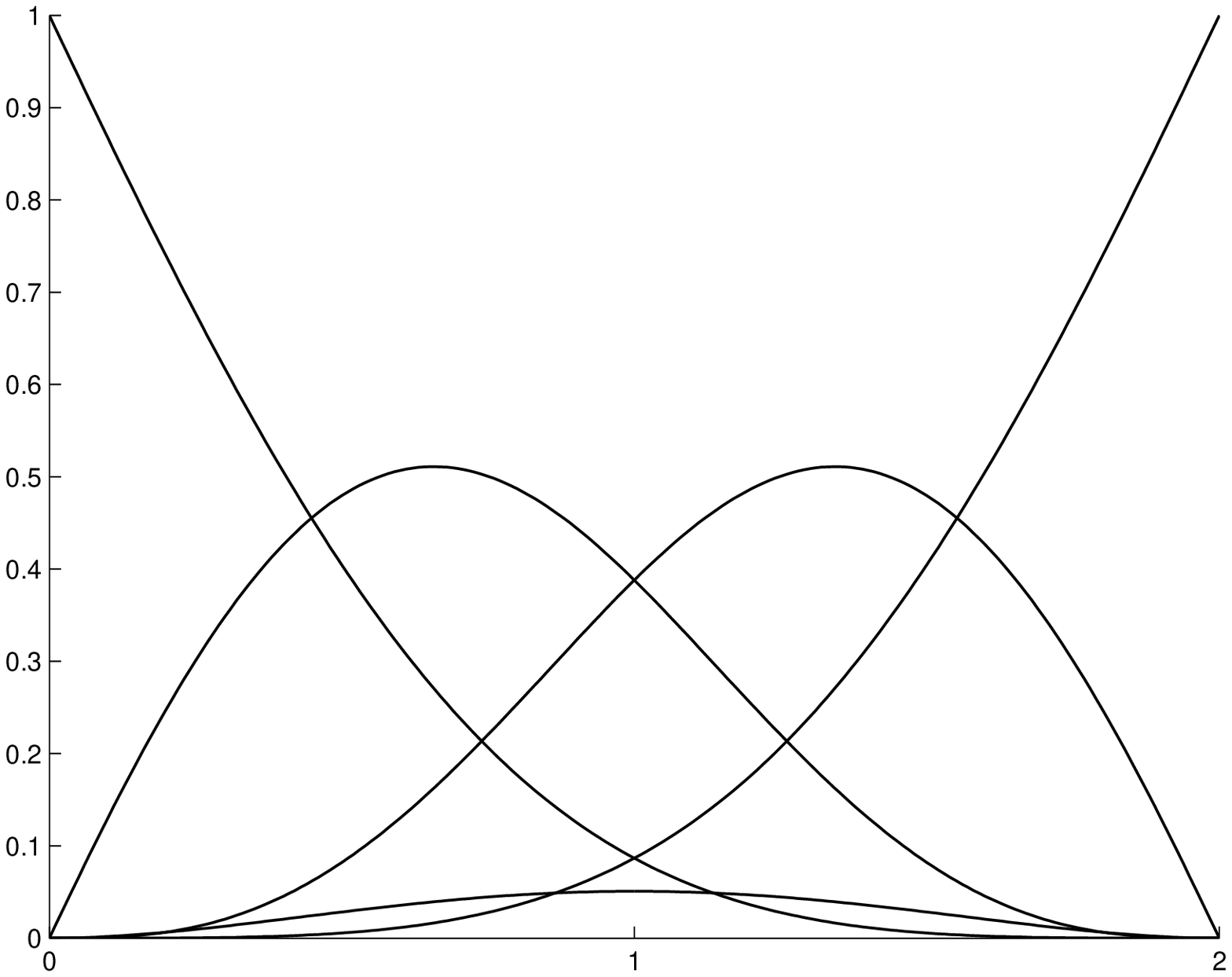}\label{fig:ex2_8}}
\hfill
\subfigure[$B_{\ell,6}$, $\ell=1,\dots,6$]
{\includegraphics[width=0.95\textwidth/3]{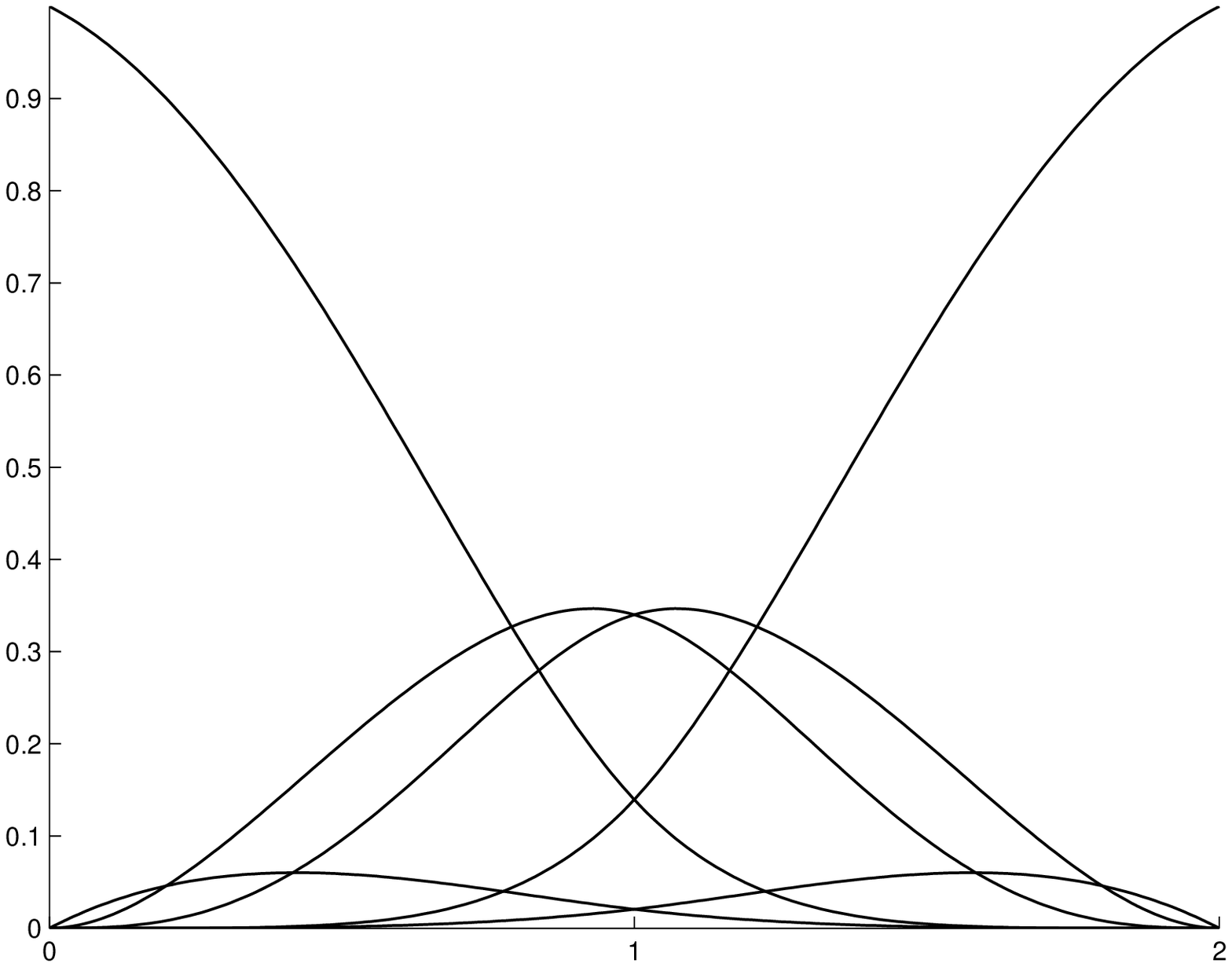}\label{fig:ex2_9}}
\caption{Transition functions and Bernstein basis relative to the spline spaces $S(\bUU_m,\bDelta,\bR)$, $m=4,5,6$, considered in Example \ref{ex:2}. The connection matrices are given in \protect\eqref{eq:R_ex2}. Figure \ref{fig:ex2_4}-\ref{fig:ex2_7}: section spaces $\bUU_{4}$, with connection matrix $R_1^{\text{(a)}}$ where $\beta=-3.9$. Figure \ref{fig:ex2_5}-\ref{fig:ex2_8}: section spaces $\bUU_{5}$, with connection matrix $R_1^{\text{(b)}}$ where $\beta=-3.5$. Figure \ref{fig:ex2_6}-\ref{fig:ex2_9}: section spaces $\bUU_{6}$, with connection matrix $R_1^{\text{(c)}}$ where $\beta=-6.5$.}
\label{fig:f2_2}
\end{figure}

\end{document}